\documentclass[onecolumn,showpacs]{revtex4-2}
\usepackage{graphicx}
\usepackage{dcolumn}
\usepackage{amsmath}
\usepackage{amssymb}
\usepackage{hyperref}
\usepackage{scalerel}
\usepackage{cancel}
\allowdisplaybreaks

\newtheorem{theorem}{Theorem}[section]
\newtheorem{corollary}[theorem]{Corollary}
\newtheorem{proposition}[theorem]{Proposition}

\newtheorem{lemma}[theorem]{Lemma}
\newtheorem{definition}[theorem]{Definition}
\newenvironment{proof}[1][Proof]{\begin{trivlist}
\item[\hskip \labelsep {\bfseries #1}]}{\end{trivlist}}

\newcommand{\qed}{\nobreak \ifvmode \relax \else
	\ifdim\lastskip<1.5em \hskip- \lastskip
	\hskip1.5em plus0em minus0.5em \fi \nobreak
	\vrule height0.75em width0.5em depth0.25em\fi}

\begin{document}

\title{Conformal geometry on a class of embedded hypersurfaces in spacetimes}

\author{Abbas M. \surname{Sherif}$^{1,2,}$}
\email{abbasmsherif25@ibs.re.kr}
\affiliation{$^1$Center for Geometry and Phyics, Institute for Basic Sciences, Pohang University of Science and Technology, 77 Cheongham-ro, Nam-gu, Pohang, Gyeongbuk 37673, South Korea}

\author{Peter K. S. \surname{Dunsby}$^{2,3}$}
\email{peter.dunsby@uct.ac.za}
\affiliation{$^2$Cosmology and Gravity Group, Department of Mathematics and Applied Mathematics, University of Cape Town, Rondebosch 7701, South Africa}
\affiliation{$^3$South African Astronomical Observatory, Observatory 7925, Cape Town, South Africa}

\begin{abstract}
In this work, we study various geometric properties of embedded spacelike hypersurfaces in $1+1+2$ decomposed spacetimes with a preferred spatial direction, denoted $e^{\mu}$, which are orthogonal to the fluid flow velocity of the spacetime and admit a proper conformal transformation. To ensure non-vanishing and positivity of the scalar curvature of the induced metric on the hypersurface, we impose that the scalar curvature of the conformal metric is non-negative and that the associated conformal factor $\varphi$ satisfies $\hat{\varphi}^2+2\hat{\hat{\varphi}}>0$, where $\hat{\ast}$ denotes derivative along the preferred spatial direction. Firstly, it is demonstrated that such hypersurface is either of Einstein type or the spatial twist vanishes on it, and that the scalar curvature of the induced metric is constant. It is then proved that if the hypersurface is compact and of Einstein type and admits a proper conformal transformation, then the hypersurface must be isomorphic to the $3$-sphere, where we make use of some well known results on Riemannian manifolds admitting conformal transformations. If the hypersurface is not of Einstein type and have nowhere vanishing sheet expansion, we show that this conclusion fails. However, with the additional conditions that the scalar curvatures of the induced metric and the conformal metric coincide, the associated conformal factor is strictly negative and the third and higher order derivatives of the conformal factor vanish, the conclusion that the hypersurface is isomorphic to the $3$-sphere follows. Furthermore, additional results are obtained under the conditions that the scalar curvature of a metric conformal to the induced metric is also constant. Finally, we consider some of our results in context of locally rotationally symmetric spacetimes and show that, if the hypersurfaces are compact and not of Einstein type, then under specified conditions the hypersurface is isomorphic to the $3$-sphere, where we constructed explicit examples of proper conformal Killing vector fields along $e^{\mu}$.
\end{abstract}

\pacs{}

\maketitle

\section{Introduction}

Conformal symmetries on Lorentzian manifolds is a well studied subject and even more so for Riemannian manifolds \cite{tb1,g1,g2,hs1,si1,t1,mo1,mo2,yam,yan1,yan2,yan3,yan4}. In the Lorentzian case, conformal Killing vector (tensor) fields have well defined kinematic and local geometric interpretations (see \cite{rm1,rm3,rm4,ac4,her1,pet1,mt1} and references therein). Higher order conformal Killing tensors have fewer studies exposing explicit physical interpretations. General results have been obtained, but explicit applications to spacetimes is less common. The works by Crampin \cite{mc1,mc2} and De Groote \cite{gro1} have extensively studied second and higher order special conformal Killing tensors. In particular, for example, the work by Crampin, \cite{mc2}, showed that the Hamilton-Jacobi equations for a Riemannian manifold admitting a conformal Killing tensor with vanishing torsion, can be solved by separation of variables. De Groote focused on special conformal Killing tensors and in \cite{gro1} it was shown that, for the class of spacetimes of Petrov D type, the only special conformal Killing tensor admitted by the spacetime was the constant Killing tensor. 

Conformal geometry in the context of global analysis on manifolds is, on the other hand, well understood. This, however, has been mostly confined to purely mathematical results. There have indeed been several works on conformal geometry via global analysis for embedded hypersurfaces. A. R. Gover and his co-authors \cite{ar1,ar2,ar3} have developed an approach to studying conformally compactified geometries, wherein new conformal invariants were constructed to study obstructions to conformal compactness. Another work worth mentioning was carried out by M. A. Akivis and V. V. Goldberg, \cite{vv1}, where the authors studied the geometry of hypersurfaces in pseudoconformal spaces under the Darboux mapping. Of interest here are works by Yamabe \cite{yam}, Goldberg \cite{si1,si2}, Yano \cite{yan2,si2,yan5} and Obata \cite{yan5,mo2,mo3,mo4}, where the authors studied conformal changes to a given metric on a Riemannian manifold and established conditions under which these manifolds are isometric to the sphere. 

Embedded hypersurfaces in spacetimes play a crucial role in various aspects of general relativity. For example, the Hamiltonian formulation of general relativity, \cite{hamme}, examines foliation of a spacetime by constant time slices for a choice of a global time function in the spacetime. Also, the existence of constant mean curvature spacelike hypersurfaces in spacetimes have consequences for the structure of singularities \cite{sos}, as well as its use in the proof of the positive mass theorem \cite{pmt}. Spacelike hypersurfaces of constant mean curvature are also ubiquitous in the study of cosmological models. Indeed, studying the geometry of hypersurfaces with symmetries in spacetimes is certainly of interest to a wide range of relativists. 

The focus of this work is on conformal changes to the metric of embedded \(3\)-dimensional embedded hypersurfaces in \(1+1+2\) decomposed spacetimes and the implications for the geometry of these hypersurfaces, where these hypersurfaces assume a particular form of the Ricci tensor. (As will be seen, the particular form of the Ricci tensor we will prescribe represents constant time slices, where the time function parametrizes the observers' world line.) This lies at the interface of differential geometry, geometric analysis and general relativity, and hence of interest to experts in all three fields. The approach to be utilised should bring out the relationship between conformal geometry and physical quantities (kinematic and geometric) specifying the hypersurfaces, and the role these quantities play in the characterisation of these hypersurfaces. While the works by Goldberg \cite{si1,si2}, Yano \cite{yan2,si2,yan5} and Obata \cite{yan5,mo2,mo3,mo4} considered a similar problem, here we approach the problem in a covariant way. We point out that the \(1+1+2\) covariant approach was recently used to study conformal symmetries in the class of locally rotationally symmetric metrics \cite{say2}, with the successful obtention of some interesting results, which tied the existence of conformal symmetries to an extremal value of the heat flux, a very interesting relationship between thermodynamics and symmetries of the spacetime. This is perhaps the first use of this semitetrad approach with regards to conformal symmetries. This work however ties the global geometry of embedded hypersurfaces to conformaly symmetries. 

This paper is outlined as follows: Section \ref{sec2} introduces the \(1+1+2\) covariant formalism that is employed in this paper, providing sufficient details as necessary to enable those not familiar with the formalism (or relativity in general) to follow the rest of the paper. In Section \ref{sec3}, we present the form of the Ricci tensor for the class of hypersurfaces to be considered. The concomitant tensors relevant to this work are written down in their covariant form. In Section \ref{sec4} the behaviour of the associated quantities under conformal transformations are considered and the conformal quantities computed. Section \ref{sec5} provides a discussion on the characterization of the hypersurfaces. In Section \ref{sec6}, some properties of the hypersurfaces, given that these hypersurfaces admit a conformal transformation, are examined and discussion on the form of conformal Killing vectors along the preferred spatial direction \(e^{\mu}\) are obtained. Section \ref{sec7} presents some results on the geometry of these hypersurfaces under conformal transformation. In Section \ref{sec8}, we discuss and analyze some of our results in context of the well known locally rotationally symmetric class of spacetimes, where there exists some preferred spatial direction. We conclude with discussion of the results in Section \ref{sec9}, and present potential future avenue of research.

\section{\(1+1+2\) spacetime decomposition}\label{sec2}

Let \(M\) be a \(4\)-dimensional spacetime, and let \(U^{\mu}\) be a \(4\)-vector in \(M\). Given a preferred unit timelike vector field \(u^{\mu}\) in \(M\) (usually chosen as the unit tangent to the observer's congruence), one splits \(U^{\mu}\) as
\begin{eqnarray*}
U^{\mu}&=&Uu^{\mu} + U^{\langle \mu \rangle }.
\end{eqnarray*}
The scalar \(U\) is the component parallel to the vector \(u^{\mu}\), and \(U^{\langle \mu \rangle }\) is the projected \(3\)-vector (the angle bracket indicates that the vector is symmetric and trace-free) projected via the tensor \(h_{\mu}^{\ \nu}\equiv g_{\mu}^{\ \nu}+u_{\mu}u^{\nu}\) which results from splitting of the metric on \(M\) \cite{ggff2}. This splitting indeed decomposes the covariant derivative of the vector \(u^{\mu}\) as
\begin{eqnarray}\label{mmmn}
\nabla_{\mu}u_{\nu}=-u_{\mu}\mathcal{A}_{\nu}+\frac{1}{3}h_{\mu\nu}\Theta+\sigma_{\mu\nu},
\end{eqnarray}
and the energy momentum tensor decomposes as
\begin{eqnarray}
T_{\mu\nu}=\rho u_{\mu}u_{\nu} + 2q_{(\mu}u_{\nu)} +ph_{\mu\nu} + \pi_{\mu\nu}.
\end{eqnarray}

The quantity \(\mathcal{A}_{\mu}\) is the acceleration vector; \(\Theta\equiv D_{\mu}u^{\mu}\) is the expansion; \(\sigma_{\mu\nu}=D_{\langle \nu}u_{{\mu}\rangle}\) is the projected symmetric trace-free shear tensor; \(\rho\equiv T_{\mu\nu}u^{\mu}u^{\nu}\) is the energy density; \(q_{\mu}=-h_{\mu}^{\ \nu}T_{\nu\gamma}u^{\gamma}\) is the \(3\)-vector defining the heat flux; \(p\equiv\left(1/3\right)h^{\mu\nu}T_{\mu\nu}\) is the isotropic pressure; and the tensor \(\pi_{\mu\nu}\) defines the anisotropic stress.

Given a unit spatial direction \(e^{\mu}\) orthogonal to \(u^{\mu}\), one may split the \(3\)-space into a direction along \(e^{\mu}\) and the remaing \(2\)-space, with both \(u^{\mu}\) and \(e^{\mu}\) orthogonal to this \(2\)-space (the \(2\)-space is sometimes referred to as the ``\(2\)-sheet" in the literature). This further splitting results in the decomposition of the metric on \(M\),
\begin{eqnarray}
N_{\mu\nu}=g_{\mu\nu}+u_{\mu}u_{\nu}-e_{\mu}e_{\nu}.
\end{eqnarray} 
The tensor \(N_{\mu\nu}\) projects \(2\)-vectors orthogonal to \(u^{\mu}\) and \(e^{\mu}\) onto the \(2\)-surface (note that \(N^{\ \mu}_{\mu}\) is \(2\)), with \(u^{\mu}N_{\mu\nu}=e^{\mu}N_{\mu\nu}=0\). The vectors \(u^{\mu}\) and \(e^{\mu}\) are normalized so that \(u^{\mu}u_{\mu}=-1\) and \(e^{\mu}e_{\mu}=1\). The splitting results in the following derivatives:

\begin{itemize}
\item For an arbitrary tensor \(S^{\mu..\nu}_{\ \ \ \ \gamma..\delta}\), one defines the \textit{covariant time derivative} (or simply the dot derivative)  along the observers' congruence of \(S^{\mu..\nu}_{\ \ \ \ \gamma..\delta}\) as \(\dot{S}^{\mu..\nu}_{\ \ \ \ \gamma..\delta}\equiv u^{\sigma}\nabla_{\sigma}S^{\mu..\nu}_{\ \ \ \ \gamma..\delta}\).

\item For an arbitrary tensor \(S^{\mu..\nu}_{\ \ \ \ \gamma..\delta}\) one defines the fully orthogonally \textit{projected covariant derivative} \(D\) with the tensor \(h_{\mu\nu}\) as \(D_\sigma S^{\mu..\nu}_{\ \ \ \ \gamma..\delta}\equiv h^{\mu}_{\ \rho}h^{\eta}_{\ \gamma}...h^{\nu}_{\ \tau}h^{\iota}_{\ \delta}h^{\lambda}_{\ \sigma}\nabla_{\lambda}S^{\rho..\tau}_{\ \ \ \ \eta..\iota}\).

\item Given a \(3\)-tensor \(\psi^{\mu..\nu}_{\ \ \ \ \gamma..\delta}\) the spatial derivative along the vector field \(e^{\mu}\) (simply called the \textit{hat derivative}) is given by \(\hat{\psi}_{\mu..\nu}^{\ \ \ \ \gamma..\delta}\equiv e^{\sigma}D_{\sigma}\psi_{\mu..\nu}^{\ \ \ \ \gamma..\delta}\).

\item Given a \(3\)-tensor \(\psi^{\mu..\nu}_{\ \ \ \ \gamma..\delta}\) the projected spatial derivative on the \(2\)-sheet (projection by the tensor \(N_{\mu}^{\ \nu}\)), called the \textit{delta derivative}, is given by \(\delta_\sigma\psi_{\mu..\nu}^{\ \ \ \ \gamma..\delta}\equiv N_{\mu}^{\ \rho}..N_{\nu}^{\ \tau}N_{\eta}^{\ \gamma}..N_\iota^{\ \delta}N_{\sigma}^{\ \lambda}D_{\lambda}\psi_{\rho..\tau}^{\ \ \ \ \eta..\iota}\).
\end{itemize}

For any \(3\)-vector \(\psi^{\mu}\), one may split \(\psi^{\mu}\) into a scalar part \(\Psi\), parallel to \(e^{\mu}\) and a vector part \(\Psi^{\mu}\), lying in the \(2\)-sheet, which is orthogonal to \(e^{\mu}\), i.e.
\begin{eqnarray}
\psi^{\mu}=\Psi e^{\mu}+\Psi^{\mu}.
\end{eqnarray}

The shear, electric and magnetic Weyl tensors can be written respectively as

\begin{subequations}
\begin{align}
\sigma_{\mu\nu}&=\Sigma\left(e_{\mu}e_{\nu}-\frac{1}{2}N_{\mu\nu}\right)+2\Sigma_{(\mu}e_{\nu)}+\Sigma_{\mu\nu},\\
E_{\mu\nu}&=\mathcal{E}\left(e_{\mu}e_{\nu}-\frac{1}{2}N_{\mu\nu}\right)+2\mathcal{E}_{(\mu}e_{\nu)}+\mathcal{E}_{\mu\nu},\\
H_{\mu\nu}&=\mathcal{H}\left(e_{\mu}e_{\nu}-\frac{1}{2}N_{\mu\nu}\right)+2\mathcal{H}_{(\mu}e_{\nu)}+\mathcal{H}_{\mu\nu},
\end{align}
\end{subequations}
and the full covariant derivatives of the vectors \(u^{\mu}\) and \(e^{\mu}\) are given respectively by \cite{cc1}

\begin{subequations}
\begin{align}
\nabla_{\mu}u_{\nu}&=-\left(\mathcal{A}_{\nu}+\mathcal{A}e_{\nu}\right)u_{\mu} + e_{\mu}e_{\nu}\left(\frac{1}{3}\Theta + \Sigma\right)+\Omega\varepsilon_{\mu\nu}+\frac{1}{2} N_{\mu\nu}\left(\frac{2}{3}\Theta -\Sigma\right)+e_{\mu}\left(\Sigma_{\nu}+\varepsilon_{\nu\delta}\Omega^{\delta}\right)+\Sigma_{\mu\nu}\notag\\
&+\left(\Sigma_{\mu}-\varepsilon_{\mu\delta}\Omega^{\delta}\right)e_{\nu},\label{4}\\
\nabla_{\mu}e_{\nu}&=-\mathcal{A}u_{\mu}u_{\nu} -u_{\mu}\alpha_{\nu}+\left(\frac{1}{3}\Theta + \Sigma\right)e_{\mu}u_{\nu} +\frac{1}{2}\phi N_{\mu\nu}+\left(\Sigma_{\mu}-\varepsilon_{\mu\delta}\Omega^{\delta}\right)u_{\nu}+e_{\mu}a_{\nu}+\xi\varepsilon_{\mu\nu}+\zeta_{\mu\nu},\label{444}
\end{align}
\end{subequations}
where \(\varepsilon_{\mu\nu}=\varepsilon_{\mu\nu\delta}e^{\delta}=u^{\gamma}\eta_{\gamma\mu\nu\delta}e^{\delta}\) is the \(2\)-dimensional alternating Levi-Civita tensor, \(a_{\mu}=\hat{e}_{\mu}\) is the acceleration of the normal vector \(e^{\mu}\), \(\phi=\delta_{\mu}e^{\mu}\) is the sheet expansion, \(\xi=\frac{1}{2}\varepsilon^{\mu\nu}\delta_{\mu}e_{\nu}\) is the sheet twist and \(\zeta_{\mu\nu}=\delta_{\mu}e_{\nu}\) is the shear of \(e^{\mu}\). We also have the following relations:

\begin{eqnarray*}
\begin{split}
\dot{e}^{\mu}&=\mathcal{A}e^{\mu}+\alpha^{\mu}\\
\omega^{\mu}&=\Omega e^{\mu}+\Omega^{\mu},\\
q^{\mu}&=Q e^{\mu}+Q^{\mu},
\end{split}
\end{eqnarray*}
where \(\omega^{\mu}\) is the rotation vector. (Full details of this formalism can be found in \cite{cc1} and associated references.)

\section{The Ricci and concomitant tensors}\label{sec3}

In this section, we shall briefly discuss an approach to specifying hypersurfaces through the prescription of the Ricci tensor of the associated induced metric on the hypersurface. We will then write out the concomitant tensors required for the rest of this work, in terms of the \(1+1+2\) covariant quantities.

Let \(\mathcal{T}\) be a codimension \(1\) properly embedded submanifold in a \(1+1+2\) decomposed spacetime. Denote by \(g_{\mu\nu}\) the Lorentzoan metric on \(M\), and \(h_{\mu\nu}\) the induced metric on \(h_{\mu\nu}\). One obtains the curvature quantities on \(\mathcal{T}\) from those of \(M\) through the following steps \cite{haw1}:

\begin{enumerate}

\item Define a general vector \(Y^{\mu}\), which is normal to \(\mathcal{T}\);

\item Decompose the metric \(g_{\mu\nu}\) into the sum of the first  fundamental form \(h_{\mu\nu}\) on \(\mathcal{T}\) and the symmetric \(2\)-tensor \(Y_{\mu}Y_{\nu}\):

\begin{eqnarray}\label{3.1}
g_{\mu\nu}=h_{\mu\nu}\pm Y_{\mu}Y_{\nu};
\end{eqnarray}

\item Obtain the second fundamental form on \(\mathcal{T}\) as 

\begin{eqnarray}\label{3.2}
\chi_{\mu\nu}=h^{\delta}_{\ (\mu}h^{\gamma}_{\ \nu)}\nabla_{\delta}Y_{\gamma};
\end{eqnarray}

\item The Ricci tensor on \(\mathcal{T}\) with respect to \(h_{\mu\nu}\) is obtained as 

\begin{eqnarray}\label{3.3}
R'_{\mu\nu}=\left(R_{\delta\gamma}\mp R^{\alpha}_{\ \delta\beta\gamma}Y_{\alpha}Y^{\beta}\right)h^{\delta}_{\ \mu}h^{\gamma}_{\ \nu}\pm\chi\chi_{\mu\nu}\mp\chi^{\sigma}_{\ \mu}\chi_{\sigma\nu},
\end{eqnarray}

\end{enumerate}
where we have used the ``prime" to denote the Ricci tensor on \(\mathcal{T}\) and the unprimed Ricci and curvature tensors are those of the ambient spacetime \(M\) (this is the convention we shall follow throughout this work). We will also use \(``D"\) to denote the compatible connection on \(\mathcal{T}\). The scalar \(\mathcal{X}\) is just the trace of \eqref{3.2}. Whether \(\mathcal{T}\) is timelike or spacelike specifies the choice of sign (``+" or ``-" respectively). 

The curvature tensor on \(\mathcal{T}\) can be computed using

\begin{eqnarray}\label{3.4}
R'^{\mu}_{\ \nu\delta\gamma}=R^{\alpha}_{\ \delta\beta\gamma}h^{\delta}_{\ \mu}h^{\gamma}_{\ \nu}h^{\delta}_{\ \mu}h^{\gamma}_{\ \nu}\pm\chi^{\mu}_{\ \delta}\chi_{\nu\sigma}\mp\chi^{\mu}_{\ \sigma}\chi_{\nu\delta},
\end{eqnarray}
and the scalar curvature is just

\begin{eqnarray}\label{3.6}
R'&=\left(R\pm\chi^2\right)\mp\left(R_{\mu\nu}Y^{\mu}Y^{\nu}+\chi_{\mu\nu}\chi^{\mu\nu}\right).
\end{eqnarray}

In this work, we will be interested in a class of codimension \(1\) hypersurfaces with the following particular form of the Ricci tensor:

\begin{eqnarray}\label{3.7}
R'_{\mu\nu}=\alpha e_{\mu}e_{\nu}+\beta N_{\mu\nu},
\end{eqnarray}
with the scalar curvature given by

\begin{eqnarray}\label{3.8}
R'=\alpha+2\beta.
\end{eqnarray}
The form of the Ricci tensor \eqref{3.7} represents spacelike hypersurfaces at an instant of time. Physically, they represent a co-moving observer's rest space in the spacetime. The choice of the Ricci tensor is largely motivated by those of constant time spacelike slices in the class locally rotationally symmetric solutions, to which we will apply some of the geometric results we shall obtain in the subsequent sections. (This class of spacetimes contains well studied solutions like the Oppenheimer dust model, Lemaitre-Tolman-Bondi solutions, Schwarzschild solution, etc., and aspects of hypersurfaces in these spacetimes have also been studied. For example, these hypersurfaces, if of Cauchy type, can be used to specify initial data by which to evolve the quantities specifying the spacetime (see for example \cite{say1}).

The curvature tensor and the cotton tensor can be expressed respectively as

\begin{subequations}
\begin{align}
R'_{\mu\nu\delta\gamma}&=\left(N_{\delta\nu}+e_{\delta}e_{\nu}\right)\biggl[\frac{\alpha}{2}N_{\gamma\mu}+\left(\beta-\frac{\alpha}{2}\right)e_{\gamma}e_{\mu}\biggr]-\left(N_{\gamma\nu}+e_{\gamma}e_{\nu}\right)\biggl[\frac{\alpha}{2}N_{\delta\mu}+\left(\beta-\frac{\alpha}{2}\right)e_{\delta}e_{\mu}\biggr]\notag\\
&+\left(N_{\delta\mu}+e_{\delta}e_{\mu}\right)\left(\beta N_{\gamma\nu}+\alpha e_{\gamma}e_{\nu}\right)-\left(N_{\gamma\mu}+e_{\gamma}e_{\mu}\right)\left(\beta N_{\delta\nu}+\alpha e_{\delta}e_{\nu}\right),\label{3.11}\\
C'_{\mu\nu\delta}&=D_{\delta}R'_{\mu\nu}-D_{\nu}R'_{\mu\delta}+\frac{1}{4}\left(h_{\mu\delta}D_{\nu}R'-h_{\mu\nu}D_{\delta}R'\right)\nonumber\\
&=2\left(\alpha-\beta\right)\left(e_{\mu}D_{[\delta}e_{\nu]}+e_{[\nu}D_{\delta]}e_{\mu}\right) +2\left[e_{\mu}e_{[\nu}D_{\delta]}+\frac{1}{4}h_{\mu[\delta}D_{\nu]}\right]\alpha+2\left[\left(h_{\mu[\nu}-e_{\mu}e_{[\nu}\right)D_{\delta]}+\frac{1}{2}h_{\mu[\delta}D_{\nu]}\right]\beta,\label{3.12}
\end{align}
\end{subequations}

The scalars \(\alpha\) and \(\beta\) can be expressed in terms of well defined scalar quantities on the spacetimes. Explicitly, we compute them as

\begin{subequations}
\begin{align}
\alpha&=\frac{2}{3}\left(\rho+\Lambda\right)+\mathcal{E}+\frac{1}{2}\Pi-\left(\frac{1}{3}\Theta+\Sigma\right)\left(\frac{2}{3}\Theta-\Sigma\right),\label{3.9}\\
\beta&=\frac{2}{3}\rho-\frac{1}{2}\left(\mathcal{E}+\frac{1}{2}\Pi\right)-\frac{1}{2}\left(\frac{2}{3}\Theta-\Sigma\right)\left(\frac{2}{3}\Theta+\frac{1}{2}\Sigma\right)+2\Omega^2,\label{10}
\end{align}
\end{subequations}
with \(\Lambda\) being the cosmological constant. It is important to mention that in general, there are additional terms in the Ricci tensor including \(u_{\mu}u_{\nu}\), mixed terms in \(u_{\mu}\) and \(e_{\mu}\), as well as terms constructed from products of \(2\)-vectors in the spacetime and the unit vectors \(u_{\mu}\) and \(e_{\mu}\). So the case we are considering is restrictive, but nonetheless does capture hypersurfaces in some well known spacetimes as was just mentioned.

For the particular class of hypersurfaces under consideration here, the first fundamental form on \(\mathcal{T}\) is just the projector

\begin{eqnarray}\label{3.10}
h_{\mu\nu}&=N_{\mu\nu}+e_{\mu}e_{\nu}.
\end{eqnarray}

\section{Behavior of the tensors under conformal rescalings}\label{sec4}

In this section, we interpret the behavior of various curvature quantities under conformal transformation in terms of the spacetime covariant variables.

For a given manifold, if two metrics exist on the manifold that are related to each other by of a scale factor, then the metrics are said to be conformal to each other. A necessary and sufficient condition for such a relation is the existence of an infinitesimal transformation generated by some vector field. Such transformations have implications fo the global geometry of the manifolds and will be used throughout the results of this paper. 

Let \(\mathfrak{X}\left(H\right)\) denote the set of smooth vector fields on a hypersurface \(\mathcal{T}\), and let \(v\in\mathfrak{X}\left(H\right)\). Then, \(\mathcal{T}\) is said to admit a conformal transformation if

\begin{eqnarray}\label{4.1}
\left(\mathcal{L}_{v}-2\varphi\right)h_{\mu\nu}=0,
\end{eqnarray}
where \(\varphi\in C^{\infty}\left(T\right)\) is a smooth function on \(\mathcal{T}\), and the operator \(\mathcal{L}_{v}\) denotes the Lie derivative along the vector field \(v\). The metric generated by the transformation is given as

\begin{eqnarray}\label{4.2}
\tilde{h}_{\mu\nu}=e^{2\varphi}h_{\mu\nu}.
\end{eqnarray}
If \(\varphi=0\), then the transformation is referred to as an \textit{isometry}. If \(\varphi\) is a non-zero constant, then the transformation is called a \textit{homothety}. And if \(\varphi\) is non-constant, then the transformation is said to be a \textit{proper} conformal transformation. 

Now, define the following quantities \cite{yan5}

\begin{eqnarray*}
\varphi_{\mu}&=D_{\mu}\varphi,\quad\nu=e^{-\varphi},\quad\nu_{\mu}=D_{\mu}\nu,\quad\Delta\varphi=h^{\mu\nu}D_{\mu}D_{\nu}\varphi,\\
\varphi_{\mu\nu}&=D_{\mu}\varphi_{\nu}-\varphi_{\mu}\varphi_{\nu}+\frac{1}{2}\varphi_{\sigma}\varphi^{\sigma}h_{\mu\nu},\quad\varphi_{\mu}^{\mu}=\Delta\varphi+\frac{1}{2}\varphi_{\mu}\varphi^{\mu},
\end{eqnarray*}
where \(\Delta\) is the \(3\)-dimensional Laplacian. Their \(1+1+2\) covariant expressions are given by

\begin{subequations}
\begin{align}
\varphi_{\mu}&=\hat{\varphi}e_{\mu}+\delta_{\mu}\varphi,\label{4.3}\\
\nu_{\mu}&=-e^{-\varphi}\left(\hat{\varphi}e_{\mu}+\delta_{\mu}\varphi\right)=-\nu\varphi_{\mu},\label{4.4}\\
\Delta\varphi&=\hat{\hat{\varphi}}+\phi\hat{\varphi}-\left(a^{\mu}-\delta^{\mu}\right)\delta_{\mu}\varphi,\label{4.5}\\
\varphi_{\mu\nu}&=\left(\hat{\hat{\varphi}}-\frac{1}{2}\hat{\varphi}^2+\delta_{\sigma}\varphi\delta^{\sigma}\varphi\right)e_{\mu}e_{\nu}-2\hat{\varphi}e_{(\mu}\delta_{\nu)}\varphi+\left(D_{\mu}-\delta_{\mu}\varphi\right)\delta_{\nu}\varphi+\left(\delta_{\mu}\hat{\varphi}+\hat{\varphi}D_{\mu}\right)e_{\nu}+\frac{1}{2}\left(\hat{\varphi}^2+\delta_{\sigma}\varphi\delta^{\sigma}\varphi\right)N_{\mu\nu},\label{4.6}\\
\varphi_{\mu}^{\ \mu}&=\hat{\hat{\varphi}}+\left(\phi+\frac{1}{2}\hat{\varphi}\right)\hat{\varphi}+\left(\frac{1}{2}\delta_{\mu}\varphi+\delta_{\mu}-a_{\mu}\right)\delta^{\mu}\varphi.\label{4.7}
\end{align}
\end{subequations}
We also define the following tensor

\begin{eqnarray}\label{4.8}
\mathcal{G}_{\mu\nu}&=R'_{\mu\nu}-\frac{1}{3}R'h_{\mu\nu},
\end{eqnarray}
whose covariant expression is given by

\begin{eqnarray}\label{4.9}
\mathcal{G}_{\mu\nu}=\frac{1}{3}\left(\alpha-\beta\right)\left(2e_{\mu}e_{\nu}-N_{\mu\nu}\right).
\end{eqnarray}
The tensor \(\mathcal{G}_{\mu\nu}\) satisfies \cite{yan5}

\begin{subequations}
\begin{align}
h^{\mu\nu}\mathcal{G}_{\mu\nu}&=0,\label{4.10}\\
D^{\mu}\mathcal{G}_{\mu\nu}&=\frac{1}{6}D_{\nu}R'.\label{4.11}
\end{align}
\end{subequations}
Clearly \eqref{4.10} is satisfied as the second parenthesis of \eqref{4.9} is zero. Furthermore, under conformal transformation of the induced metric, the Ricci tensor, the Ricci scalar and \(\mathcal{G}_{\mu\nu}\) transform as \cite{yan5}

\begin{subequations}
\begin{align}
\tilde{R}'_{\mu\nu}&=R'_{\mu\nu}-\varphi_{\mu\nu}-\varphi_{\sigma}^{\ \sigma}h_{\mu\nu}\label{4.15},\\
\tilde{R}'&=e^{-2\varphi}\left(R'-4\varphi_{\mu}^{\ \mu}\right)\label{4.16},\\
\tilde{\mathcal{G}}_{\mu\nu}&=\mathcal{G}_{\mu\nu}-\left(D_{\mu}\varphi_{\nu}-\varphi_{\mu}\varphi_{\nu}\right)+\frac{1}{3}\left(\Delta\varphi-\varphi_{\sigma}\varphi^{\sigma}\right)h_{\mu\nu}\label{4.17},
\end{align}
\end{subequations}
which can be expressed in the covariant way as

\begin{subequations}
\begin{align}
\tilde{R}'_{\mu\nu}&=-\left(\hat{\hat{\varphi}}+\hat{\varphi}^2+\frac{1}{2}\phi\hat{\varphi}+\left(\delta_{\sigma}-a_{\sigma}\right)\delta^{\sigma}\varphi+\delta_{\sigma}\varphi\delta^{\sigma}\varphi-\alpha\right)N_{\mu\nu}-\left(2\hat{\hat{\varphi}}+\left(\delta_{\sigma}-a_{\sigma}\right)\delta^{\sigma}\varphi+\delta_{\sigma}\varphi\delta^{\sigma}\varphi-\beta\right)e_{\mu}e_{\nu}\notag\\
&+2\hat{\varphi}e_{(\mu}\delta_{\nu)}\varphi-e_{\mu}\widehat{\delta_{\nu}\varphi}-e_{\nu}\delta_{\mu}\hat{\varphi}+\left(\delta_{\mu}\varphi-\delta_{\mu}\right)\delta_{\nu}\varphi,\label{4.18}\\
\tilde{R}'&=e^{-2\varphi}\left[R'-2\left(\left(\hat{\varphi}^2+\delta_{\mu}\varphi\delta^{\mu}\varphi\right)+2\left(\hat{\hat{\varphi}}+\delta_{\mu}\delta^{\mu}\varphi-a_{\mu}\delta^{\mu}\varphi\right)\right)\right],\label{4.19}\\
\tilde{\mathcal{G}}_{\mu\nu}&=-\frac{1}{3}\left[\left(\alpha-\beta\right)-\hat{\hat{\varphi}}-\left(\phi-\hat{\varphi}\right)\hat{\varphi}+\left(a^{\sigma}-\delta^{\sigma}+\delta^{\sigma}\varphi\right)\delta_{\sigma}\varphi\right]N_{\mu\nu}+\left(\hat{\varphi}D_{\mu}+\delta_{\mu}\hat{\varphi}\right)e_{\nu}+2\hat{\varphi}e_{(\mu}\delta_{\nu)}\varphi\notag\\
&+\frac{2}{3}\left[\left(\alpha-\beta\right)-\hat{\hat{\varphi}}+\left(\frac{1}{2}\phi+\hat{\varphi}\right)\hat{\varphi}-\frac{1}{2}\left(a^{\sigma}-\delta^{\sigma}+\delta^{\sigma}\varphi\right)\delta_{\sigma}\varphi\right]e_{\mu}e_{\nu}+\left(D_{\mu}+\delta_{\mu}\varphi\right)\delta_{\nu}\varphi,\label{4.20}
\end{align}
\end{subequations}
where the overhead '\textit{tilde}` notation denotes quantities associated to the metric \(\tilde{h}_{\mu\nu}\). We also define two important scalars

\begin{subequations}
\begin{align}
\mathcal{G}_{\mu\nu}\varphi^{\mu}\varphi^{\nu}&=\frac{1}{3}\left(\alpha-\beta\right)\left(2\hat{\varphi}^2-\delta_{\mu}\varphi\delta^{\mu}\varphi\right),\label{4.21}\\
\mathcal{G}_{\mu\nu}\nu^{\mu}\nu^{\nu}&=e^{-2\varphi}\mathcal{G}_{\mu\nu}\varphi^{\mu}\varphi^{\nu}\notag\\
&=\frac{1}{3}e^{-2\varphi}\left(\alpha-\beta\right)\left(2\hat{\varphi}^2-\delta_{\mu}\varphi\delta^{\mu}\varphi\right)\\
\Longrightarrow \nu^{-2}\mathcal{G}_{\mu\nu}\nu^{\mu}\nu^{\nu}&=\mathcal{G}_{\mu\nu}\varphi^{\mu}\varphi^{\nu}.\label{4.23}
\end{align}
\end{subequations}
These quantities will be very crucial when we study the geometry of the hypersurfaces under consideration in Section \ref{sec7}.

Now, the last relation \eqref{4.11}, when explicitly written gives the following:

\begin{eqnarray}\label{4.12}
\left[\frac{2}{3}\widehat{\left(\alpha-\beta\right)}+\left(\alpha-\beta\right)\phi\right]e_{\mu}+\left(\alpha-\beta\right)a_{\mu}-\frac{1}{3}\delta_{\mu}\left(\alpha-\beta\right)=0,
\end{eqnarray}
which, by contraction with \(e^{\mu}\), gives

\begin{eqnarray}\label{4.13}
\widehat{\left(\alpha-\beta\right)}+\frac{3}{2}\phi\left(\alpha-\beta\right)=0.
\end{eqnarray}

Indeed, if \(\mathcal{T}\) is of Einstein type, then \eqref{4.13} is clearly satisfied. Otherwise, \(\alpha-\beta\) satisfies

\begin{eqnarray}\label{4.14}
\left(\alpha-\beta\right)=\exp\left(-\frac{3}{2}\int \phi\ d\chi\right),
\end{eqnarray}
where integration is carried out along the integral curves of \(e^{\mu}\) (\(\chi\) parametrizes the integral curves along \(e^{\mu}\)), and \(\phi\neq0\). Hence, \(\alpha-\beta>0\) (remember we are assuming the covariant variables are finite). If \(\phi=0\), then simply

\begin{eqnarray}\label{hh}
\widehat{\left(\alpha-\beta\right)}=0.
\end{eqnarray}
In this case \(\alpha-\beta\) is a constant with no restriction on the sign. As will be seen in Section \ref{sec7}, desired results will require that this constant be non- negative.

Notice that the \(2\)-vector \(\delta^{\mu}\varphi\) is spacelike, and hence \(\delta_{\mu}\varphi\delta^{\mu}\varphi\geq0\). Hence, the sum

\begin{eqnarray}\label{4.24}
\hat{\varphi}^2+\delta_{\mu}\varphi\delta^{\mu}\varphi\geq0,
\end{eqnarray}
with the sum being zero if and only if the transformation is not a proper conformal transformation (an isometry or a homothety). We therefore make the following observation: in the case of vanishing sheet terms, i.e. \(a_{\mu}=\delta_{\mu}\varphi=0\), whenever we have a conformal transformation that is proper, for

\begin{subequations}
\begin{align}
\hat{\hat{\varphi}}\geq0\quad\mbox{or}\quad\frac{1}{2}\hat{\varphi}^2\geq \ \vline\ \hat{\hat{\varphi}}\ \vline\ ,
\end{align}
\end{subequations}  
imposing that the scalar curvature associated to the conformally transformed metric to be non-negative, i.e. \(\tilde{R}'\geq0\), ensures that the scalar curvature for the metric induced from the ambient spacetime is positive, i.e. \(R'>0\). The necessity of \(R'\) being strictly positive is to avoid \(R'\) vanishing somewhere on the hypersurface, as the hypersurface will necessarily be flat there (the curvature tensor will vanish).

In the subsequent sections, we present the calculations and results of this work.

\section{Characterisation of the hypersurfaces}\label{sec5}

We begin this section by providing a certain useful characterisation of the hypersurfaces under consideration. We will state this as the following proposition:

\begin{proposition}\label{pro1}
Let \(M\) be a \(1+1+2\) decomposed spacetime, and \(\mathcal{T}\) an embedded codimension 1 hypersurface with Ricci tensor of the form \eqref{3.7}. Then, either one of the following is true:

\begin{enumerate}

\item \(\mathcal{T}\) is of Einstein type; or

\item \(\mathcal{T}\) is non-twisting.

\end{enumerate}
\end{proposition}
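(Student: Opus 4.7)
The dichotomy is algebraically natural: since $h_{\mu\nu}=N_{\mu\nu}+e_\mu e_\nu$, the Ricci tensor \eqref{3.7} is proportional to $h_{\mu\nu}$ precisely when $\alpha=\beta$, which in dimension three is exactly the Einstein condition. So the substance of the proposition reduces to showing that whenever $\alpha-\beta$ is not identically zero on $\mathcal{T}$, the sheet twist $\xi$ must vanish.

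My starting point is the contracted Bianchi identity $D^{\mu}\mathcal{G}_{\mu\nu}=\tfrac{1}{6}D_{\nu}R'$ from \eqref{4.11}, which in $1+1+2$ variables takes the form \eqref{4.12}. I would first extract its two independent components: projection along $e^{\mu}$ recovers \eqref{4.13}, while projection with $N_{\mu}^{\ \nu}$ onto the $2$-sheet gives the complementary sheet relation
\begin{equation*}
\delta_{\mu}(\alpha-\beta) \;=\; 3(\alpha-\beta)\,a_{\mu}.
\end{equation*}
If $\alpha=\beta$ throughout $\mathcal{T}$, then $R'_{\mu\nu}=\beta h_{\mu\nu}$ and case (1) of the proposition holds. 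Otherwise $\alpha-\beta$ is nonzero on some open subset of $\mathcal{T}$, and I restrict attention there in order to deduce $\xi=0$.

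To bring the twist into play, I plan to apply the sheet commutator $[\delta_{\mu},\delta_{\nu}]$ to the scalar $(\alpha-\beta)$. In the $1+1+2$ framework, the commutator of sheet derivatives on a scalar $f$ carries a twist piece of the form $2\xi\varepsilon_{\mu\nu}\hat{f}$, which is precisely the obstruction to integrability of the $2$-sheet distribution. I would compute the commutator two ways: on one side as $2\delta_{[\mu}\delta_{\nu]}(\alpha-\beta)$, using the sheet relation above to substitute $\delta_{\nu}(\alpha-\beta)=3(\alpha-\beta)a_{\nu}$ and Leibniz-expand; on the other side via the commutator identity, using \eqref{4.13} to trade $\widehat{(\alpha-\beta)}$ for $-\tfrac{3}{2}\phi(\alpha-\beta)$. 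Equating the two expressions should produce, modulo pieces manifestly symmetric in $(\mu\nu)$, a genuinely antisymmetric term proportional to $\phi\,\xi\,(\alpha-\beta)\,\varepsilon_{\mu\nu}$ whose vanishing forces $\phi\,\xi\,(\alpha-\beta)=0$. Where $\phi\neq 0$, the exponential formula \eqref{4.14} guarantees $\alpha-\beta\neq 0$, so $\xi=0$ immediately. In the complementary locus where $\phi=0$, equation \eqref{hh} makes $\alpha-\beta$ a nonzero constant; the sheet relation then forces $a_{\mu}=0$, and a companion commutator applied to an auxiliary probe (for instance one built from the $1+1+2$ scalars controlling $\zeta_{\mu\nu}$) should again isolate the twist contribution.

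The principal obstacle I expect is bookkeeping rather than conceptual: the full $1+1+2$ identity for $[\delta_{\mu},\delta_{\nu}]f$ contains, besides the twist piece, symmetric sheet contributions involving $\zeta_{\mu\nu}$, $a_{\mu}$ and $\phi$, and these must be matched precisely against the Leibniz expansion of $\delta_{\nu}\bigl[(\alpha-\beta)a_{\mu}\bigr]$ for the desired cancellation to leave only $\xi$. Should this cancellation prove recalcitrant, my fallback is to exploit the Cotton tensor \eqref{3.12}, whose explicit form in this setting already packages the combination $D_{[\delta}e_{\nu]}=\xi\varepsilon_{\delta\nu}+e_{[\delta}a_{\nu]}$ with an $(\alpha-\beta)$ prefactor; the trace and divergence properties of $C'_{\mu\nu\delta}$ on a Riemannian $3$-manifold then furnish additional algebraic relations from which $\xi=0$ can be extracted once the Einstein alternative has been excluded.
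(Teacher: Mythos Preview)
Your primary route through sheet commutators is both harder than necessary and incomplete. Even granting that the bookkeeping in $[\delta_{\mu},\delta_{\nu}](\alpha-\beta)$ works out as you hope, the twist enters only via $\xi\,\varepsilon_{\mu\nu}\,\widehat{(\alpha-\beta)}$, and after substituting \eqref{4.13} this gives at best $\phi\,\xi\,(\alpha-\beta)=0$. Your treatment of the $\phi=0$ locus then trails off into ``a companion commutator applied to an auxiliary probe \ldots\ should again isolate the twist'', which is not an argument. So as written the proof has a genuine gap precisely where $\phi$ vanishes.

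What you list as a fallback is in fact the paper's entire proof, and it avoids all of this. The Cotton tensor \eqref{3.12} already carries the term $2(\alpha-\beta)e_{\mu}D_{[\delta}e_{\nu]}$; contracting \eqref{3.12} with $e^{\mu}\varepsilon^{\nu\delta}$ kills every piece except the one coming from $D_{[\delta}e_{\nu]}=\xi\,\varepsilon_{\delta\nu}+\ldots$, and yields directly
\[
(\alpha-\beta)\,\xi=0,
\]
with no $\phi$ in sight. The dichotomy then follows in one line. So promote your fallback to the main argument and perform that single contraction explicitly; the commutator machinery and the case split on $\phi$ are unnecessary.
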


\begin{proof}
A straightforward contraction of \eqref{3.12} by \(e^{\mu}\varepsilon^{\nu\delta}\) gives

\begin{eqnarray}\label{5.2}
\left(\alpha-\beta\right)\xi=0.
\end{eqnarray}
Hence, either \(\alpha=\beta\), in which case \(\mathcal{T}\) is of Einstein type or, the hypersurface is non-twisting, i.e. \(\xi=0\).\qed
\end{proof}

We emphasize that the two cases of Proposition \eqref{pro1} are not mutually exclusive, i.e. one may have an Einstein type hypersurface with vanishing spatial twist. Interestingly, even if \(\mathcal{T}\) is not of Einstein type, whenever \(\mathcal{T}\) has zero sheet expansion, i.e. \(\phi=0\), the condition that \(\hat{\alpha}=0\) would imply that the induced metric on \(\mathcal{T}\) is of constant scalar curvature. To see this, notice that from the contracted Bianchi identities, we have

\begin{eqnarray}\label{5.3}
\hat{\alpha}e_{\mu}+\left(\alpha-\beta\right)\left(a_{\mu}+\phi e_{\mu}\right)=\frac{1}{2}\left(\hat{R}'e_{\mu}+\delta_{\mu}\alpha\right),
\end{eqnarray}
which we contract by \(e^{\mu}\) to obtain

\begin{eqnarray}\label{5.4}
\hat{\alpha}+\left(\alpha-\beta\right)\phi=\frac{1}{2}\hat{R}'.
\end{eqnarray}
Conversely, if the scalar curvature is constant and \(\hat{\alpha}\) vanishes, then for \(\phi\neq0\) \(\mathcal{T}\) must be of Einstein type, i.e. \(\alpha-\beta=0\).

\section{Some properties of the hypersurfaces under conformal transformation}\label{sec6}

Before we begin this section, we specify the following two conditions that will be assumed throughout the rest of the paper:

\begin{enumerate}

\item  For a conformally transformed metric \(\tilde{h}_{\mu\nu}=e^{2\varphi}h_{\mu\nu}\) on \(\mathcal{T}\), the scalar curvature \(\tilde{R}'\) associated to \(\tilde{h}_{\mu\nu}\) is non-negative. (In any case, this condition will be explicitly stated whenever a proposition or theorem is presented in this work.)

\item To keep some of our calculations simplified we will also impose that, for any smooth function \(\psi\in C^{\infty}\left(\mathcal{T}\right)\), we have that \(\psi\) is constant on the \(2\)-sheet, i.e. \(\delta_{\mu}\psi=0\).
\end{enumerate}

We recall the well known result by Yamabe, relating a given metric to a conformally equivalent one on a compact Riemannian manifold:

\begin{theorem}[Yamabe]\label{thea1}
Let \(N\) be a smoothly differentiable and compact Riemannian \(n\geq3\) dimensional manifold. Then, for any given metric on \(N\), there always exists a Riemannian metric with constant scalar curvature, which is conformal to the given metric.
\end{theorem}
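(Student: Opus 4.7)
The plan is to recast the problem as a nonlinear elliptic PDE and then attack it variationally. Writing the conformal factor as $\tilde{h}=u^{4/(n-2)}h$ for a positive function $u\in C^{\infty}(N)$, a direct computation shows that the scalar curvature of $\tilde{h}$ equals
\begin{equation*}
\tilde{R}=u^{-(n+2)/(n-2)}\left(-\frac{4(n-1)}{n-2}\Delta_{h}u+Ru\right),
\end{equation*}
where $R$ is the scalar curvature of $h$ and $\Delta_{h}$ is the Laplace-Beltrami operator. Demanding $\tilde{R}\equiv c$ for some constant therefore reduces the theorem to producing a positive smooth solution of the semilinear equation
\begin{equation*}
-\frac{4(n-1)}{n-2}\Delta_{h}u+Ru=c\,u^{(n+2)/(n-2)}.
\end{equation*}

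The natural variational setting is the Yamabe functional
\begin{equation*}
Q_{h}(u)=\frac{\int_{N}\left(\tfrac{4(n-1)}{n-2}|\nabla u|^{2}+Ru^{2}\right)dV_{h}}{\left(\int_{N}u^{2n/(n-2)}\,dV_{h}\right)^{(n-2)/n}},
\end{equation*}
whose Euler-Lagrange equation is precisely the displayed PDE, and which is conformally invariant in the sense that $Q_{\tilde{h}}(v)=Q_{h}(uv)$ under $\tilde{h}=u^{4/(n-2)}h$. I would first define the Yamabe invariant
\begin{equation*}
\lambda(N,[h])=\inf\bigl\{Q_{h}(u):u\in C^{\infty}(N),\ u>0\bigr\},
\end{equation*}
and then attempt to realize this infimum by a smooth positive minimizer, which by elliptic regularity and the maximum principle would be the sought-after conformal factor.

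The central obstruction is that the denominator of $Q_{h}$ uses the \emph{critical} Sobolev exponent $2^{*}=2n/(n-2)$, for which the embedding $H^{1}(N)\hookrightarrow L^{2^{*}}(N)$ is continuous but not compact. Minimizing sequences can therefore concentrate at a point and fail to converge, so the direct method of the calculus of variations does not apply immediately. The standard workaround, due to Yamabe and Trudinger, is the subcritical approximation: for $p<2^{*}$ the embedding is compact, so one obtains smooth positive minimizers $u_{p}$ of the corresponding subcritical functional, and one then tries to pass to the limit $p\to 2^{*}$. This limit succeeds provided one can establish the strict inequality
\begin{equation*}
\lambda(N,[h])<\lambda(S^{n},[h_{\mathrm{round}}]),
\end{equation*}
where the right-hand side is an explicit universal constant. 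Aubin verified this strict inequality in the cases $n\geq 6$ with $h$ not locally conformally flat, by constructing test functions from rescaled extremals of the sphere concentrated near a point where the Weyl tensor is nonzero and expanding $Q_{h}$ to high order.

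The hard remaining cases are $n=3,4,5$ and $(N,h)$ locally conformally flat; these were resolved by Schoen, and this is where I expect the proof to be most delicate. The strategy is to compare $Q_{h}$ on test functions modeled on the Green's function of the conformal Laplacian $L_{h}=-\tfrac{4(n-1)}{n-2}\Delta_{h}+R$ at a chosen base point. The leading correction to the test-function computation is controlled by the constant term in the expansion of this Green's function, which turns out to equal, up to a positive factor, the ADM mass of an asymptotically flat manifold obtained from $(N,h)$ by a stereographic-type conformal blow-up. Invoking the positive mass theorem then produces the strict inequality, closing the argument. Finally, once a weak nonnegative minimizer $u$ has been extracted, one appeals to $L^{p}$-regularity for the critical equation, the strong maximum principle (which gives $u>0$), and bootstrap elliptic regularity to conclude that $u\in C^{\infty}(N)$, hence $\tilde{h}=u^{4/(n-2)}h$ is a smooth Riemannian metric in $[h]$ of constant scalar curvature. \qed
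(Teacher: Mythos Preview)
Your proof sketch is a correct outline of the resolution of the Yamabe problem as carried out in the literature by Yamabe, Trudinger, Aubin, and Schoen. However, the paper does not actually prove this theorem: it is stated as a well-known background result and cited without proof. Immediately after the statement, the paper remarks that Yamabe's original argument was flawed and that the repair by Trudinger and the final resolution by Schoen require the Yamabe invariant of the manifold to be bounded above by that of the round sphere---precisely the dichotomy you describe. So your proposal is not so much an alternative to the paper's proof as a fleshing-out of the history the paper alludes to in passing; there is nothing in the paper to compare it against.
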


As is well known, the originaly proof by Yamabe was flawed, and the modification by Trudinger \cite{tru1} and the subsequent resolution by Scheon \cite{scr} require some restriction on the Yamabe invariant. In particular, it is required that the Yamabe invariant of the manifold be bounded above by that of a sphere of the same dimension. Indeed, it is known that if the scalar curvature of \(h_{\mu\nu}\) is non-negative and not identically zero, \(h_{\mu\nu}\) can be deformed to a metric of constant positive scalar curvature. Hence, if one imposes that the scalar curvature of the conformally transformed metric is non-negative and that the associated conformal factor \(\varphi\) satisfies the inequality

\begin{eqnarray}\label{uoy1}
\hat{\varphi}^2+2\hat{\hat{\varphi}}>0,
\end{eqnarray}
(or the more restrictive condition \(\hat{\hat{\varphi}}\geq0\)), then \(R'\) is strictly positive. These two conditions will therefore imply that the Yamabe equation is solvable. 

Firstly, let us proceed to prove the following result:

\begin{proposition}\label{pro2}
If \(M\) is a \(4\)-dimensional \(1+1+2\) decomposed spacetime, and \(\mathcal{T}\) an embedded \(3\)-dimensional manifold in \(M\) with Ricci tensor of the form \eqref{3.7}, then, the scalar curvature of \(\mathcal{T}\) is constant, i.e.

\begin{eqnarray}\label{proeq1}
D_{\mu}R'=\hat{R}'=0.
\end{eqnarray}

\end{proposition}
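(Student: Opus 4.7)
The plan is to leverage the standing assumption that every smooth scalar on $\mathcal{T}$ is annihilated by $\delta_\mu$, which collapses the problem of showing $D_\mu R' = 0$ to showing only that $\hat{R}' = 0$. Indeed, under the orthogonal split $D_\mu R' = e_\mu \hat{R}' + \delta_\mu R'$, assumption~2 of Section~\ref{sec6} kills the sheet piece automatically, so all the work goes into the hat derivative. I would then dichotomize using Proposition~\ref{pro1}.

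In the Einstein case $\alpha = \beta$, the tensor $\mathcal{G}_{\mu\nu}$ from \eqref{4.9} vanishes identically. Substituting this into the divergence identity \eqref{4.11} immediately yields $D_\nu R' = 0$, and in particular $\hat{R}' = 0$. (This is just the familiar Schur-type consequence of the contracted Bianchi identity in dimension three.)

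In the remaining case, where $\alpha \neq \beta$ (so $\xi = 0$ by Proposition~\ref{pro1}), I would split the full vector equation \eqref{4.12} into its $e^\mu$-component and its sheet component. Assumption~2 kills the $\delta_\mu(\alpha-\beta)$ term, leaving the $e^\mu$-piece as the scalar identity \eqref{4.13} and the sheet-piece as $(\alpha-\beta)a_\mu = 0$, from which $a_\mu = 0$. Next I would project the contracted Bianchi identity \eqref{5.3} along $e^\mu$ to obtain \eqref{5.4}, namely $\hat{\alpha} + (\alpha-\beta)\phi = \tfrac{1}{2}\hat{R}'$. Combining this with \eqref{4.13}, rewritten as $\hat{\beta} = \hat{\alpha} + \tfrac{3}{2}\phi(\alpha-\beta)$, and using $\hat{R}' = \hat{\alpha} + 2\hat{\beta}$ gives two independent expressions for $\hat{R}'$; equating them forces $\hat{\alpha} = -\phi(\alpha-\beta)$, and back-substituting into \eqref{5.4} collapses the right-hand side to zero.

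I do not expect a serious obstacle: the argument is essentially bookkeeping with the contracted Bianchi identity once the orthogonal decomposition along $e^\mu$ and the 2-sheet is carried out. The only mildly delicate point is keeping the Einstein case separate, since the derivation of $a_\mu = 0$ in Case~2 relies on the factor $(\alpha-\beta)$ being nonzero; but Case~1 is handled trivially and independently by $\mathcal{G}_{\mu\nu} \equiv 0$, so the two cases cover everything and assemble cleanly into the conclusion $D_\mu R' = 0$.
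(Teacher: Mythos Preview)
Your proposal is correct and follows essentially the same route as the paper: combine the identity \eqref{4.13} with the projected Bianchi relation \eqref{5.4} and $\hat{R}'=\hat{\alpha}+2\hat{\beta}$ to force $\hat{R}'=0$. The paper does this in a single line without any case split (since \eqref{4.13} holds trivially when $\alpha=\beta$), so your dichotomy and the side derivation of $a_\mu=0$ are unnecessary detours, but the underlying algebra is identical.
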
 

\begin{proof}
A straightforward substitution of \eqref{4.13} into \eqref{5.4} simplifies to

\begin{eqnarray}\label{proeq2}
\frac{1}{3}\left(\hat{\alpha}+2\hat{\beta}\right)=\frac{1}{2}\hat{R}'.
\end{eqnarray}
We note the parenthesized term on the left hand side of \eqref{proeq2} as just the derivative of \eqref{3.8}, and hence the result follows.\qed
\end{proof}
Notice that, in obtaining the result of Proposition \ref{pro2}, we do not assume compactness. Hence, we could potentially have non-compact examples to the Yamabe problem in the class of hypersurfaces considered here, under the assumption \(\tilde{R}'\geq0\) and that \eqref{uoy1} holds. 

Now, suppose \(\tilde{h}_{\mu\nu}=e^{2\varphi}h_{\mu\nu}\) is a metric conformal to the induced metric \(h_{\mu\nu}\) on \(\mathcal{T}\), and let \(\tilde{R}'\) denote the scalar curvature associated to \(\tilde{h}_{\mu\nu}\). Considering the problem of finding the conditions under which \(\tilde{R}'\) is constant can be rephrased as a problem that examines \textit{under which conditions the constancy of the scalar curvature of the induced metric is an invariant property under conformal transformation.} 

Indeed, the constancy of \(\tilde{R}'\) imposes the following condition on the scalar curvature associated to the induced metric: the derivative of \eqref{4.19}) gives

\begin{eqnarray}\label{5.6}
\hat{R}'&=-2\left[\hat{\varphi}\left(R'-2\left(2\hat{\hat{\varphi}}-\hat{\varphi}^2\right)\right)+2\left(\hat{\hat{\hat{\varphi}}}+\hat{\varphi}\hat{\hat{\varphi}}\right)\right],
\end{eqnarray}
and hence, since \(\tilde{R}'\) is constant, we have that either \(\hat{\varphi}=0\) (in this case the transformation is not a proper conformal transformation) or

\begin{eqnarray}\label{5.7}
R'=-2\hat{\varphi}^{-1}\left(\hat{\hat{\hat{\varphi}}}-\hat{\varphi}\hat{\hat{\varphi}}-\hat{\varphi}^3\right).
\end{eqnarray}
Therefore, whenever the scalar curvature of the induce metric satisfies \eqref{5.7}, then a metric conformal to the induce metric, with associated conformal factor \(\varphi\), has constant scalar curvature.

Now, we recall that under the assumption that \(\tilde{R}'\geq0\) and that \eqref{uoy1} holds, if the transformation is proper we must have \(R'>0\). This then provides the following required restriction on the conformal factor:

\begin{eqnarray}\label{5.9}
\hat{\varphi}^{-1}\left(\hat{\hat{\hat{\varphi}}}-\hat{\varphi}\hat{\hat{\varphi}}-\hat{\varphi}^3\right)<0.
\end{eqnarray}
For small \(\varphi\), so that its derivatives of order three (3) or higher is negligible (we will write this condition as \(D^{(n\geq3)}_{\mu}\varphi=0\)), the above condition simply reduces to

\begin{eqnarray}\label{5.9s}
\hat{\hat{\varphi}}+\hat{\varphi}^2>0.
\end{eqnarray}
(In any case, if the transformation is not a proper conformation transformation, then \(R'=\tilde{R}'=0\).) It therefore follows that

\begin{proposition}\label{pro3}
Let \(M\) be a \(4\)-dimensional \(1+1+2\) decomposed spacetime, and \(\mathcal{T}\) an embedded \(3\)-dimensional manifold in \(M\) with Ricci tensor of the form \eqref{3.7}, and suppose \(\mathcal{T}\) admits a conformal transformation. Let \(\tilde{h}_{\mu\nu}=e^{2\varphi}h_{\mu\nu}\) be a metric conformal to \(h_{\mu\nu}\) such that the scalar curvature \(\tilde{R}'\) associated to \(\tilde{h}_{\mu\nu}\) is non-negative, and \eqref{uoy1} holds. If the transformation is a proper conformal transformation and \(\hat{\tilde{R}}'=0\), then the conformal factor \(\varphi\) satisfies \eqref{5.9}. And whenever \(\varphi\) is such that \(D^{(n\geq3)}_{\mu}\varphi=0\), then \eqref{5.9s} holds.
\end{proposition}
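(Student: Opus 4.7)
The plan is to show that this proposition is essentially a formalization of the discussion already unfolded in the paragraphs leading up to the statement, so the proof amounts to organizing those steps and verifying that the hypotheses line up to deliver the strict inequalities. The natural starting point is the transformation law \eqref{4.19} for $\tilde{R}'$. Under Assumption 2 (constancy of smooth scalars on the $2$-sheet), every $\delta$-derivative drops out, so \eqref{4.19} collapses to the simple form $\tilde{R}' = e^{-2\varphi}\bigl[R' - 2\hat{\varphi}^2 - 4\hat{\hat{\varphi}}\bigr]$. Differentiating along $e^\mu$ and invoking Proposition \ref{pro2} (which gives $\hat{R}'=0$) produces exactly \eqref{5.6} without any extra work.

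For the first conclusion, I would impose $\hat{\tilde{R}}' = 0$ in \eqref{5.6}. The resulting expression factors as $\hat{\varphi}\,\bigl[R' + 2(\hat{\varphi}^2 - 2\hat{\hat{\varphi}}) + 2\hat{\varphi}^{-1}\hat{\hat{\hat{\varphi}}}\bigr] = 0$ (after dividing through by the appropriate constants), giving the dichotomy $\hat{\varphi}=0$ or the bracketed factor vanishes. The first case must be ruled out by properness: since $\delta_\mu\varphi = 0$ by Assumption 2, $\hat{\varphi}\equiv 0$ would force $\varphi$ constant, contradicting that the transformation is proper. Hence the bracketed factor vanishes, and solving for $R'$ yields \eqref{5.7} directly.

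The passage from \eqref{5.7} to the strict inequality \eqref{5.9} uses the second main hypothesis. The combined assumptions $\tilde{R}'\geq 0$ and \eqref{uoy1} were shown in the discussion preceding \eqref{uoy1} to force $R'>0$ strictly (this is the Yamabe-style positivity argument; it ensures $R'$ does not vanish anywhere, so the hypersurface is not flat at any point). Substituting $R'>0$ into \eqref{5.7} and moving everything to one side gives precisely \eqref{5.9}.

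For the final sentence, the restriction $D^{(n\geq 3)}_\mu\varphi = 0$ eliminates the cubic derivative term: $\hat{\hat{\hat{\varphi}}}=0$. Plugging this into \eqref{5.9} gives $\hat{\varphi}^{-1}\bigl(-\hat{\varphi}\hat{\hat{\varphi}} - \hat{\varphi}^3\bigr) < 0$, i.e.\ $-(\hat{\hat{\varphi}}+\hat{\varphi}^2)<0$, which is \eqref{5.9s}; note that the $\hat{\varphi}^{-1}$ cancels cleanly against the common factor of $\hat{\varphi}$ in the remaining terms, so the sign of $\hat{\varphi}$ never needs to be tracked. The main subtlety — and really the only place care is required — is the properness step, where one must be sure that dividing by $\hat{\varphi}$ to obtain \eqref{5.7} is legitimate; this is exactly guaranteed by combining properness with $\delta_\mu\varphi=0$, so the argument closes.
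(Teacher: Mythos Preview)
Your proposal is correct and follows essentially the same route as the paper: the proposition is indeed just a formalization of the discussion immediately preceding it, and you have organized the steps (differentiate \eqref{4.19} with the sheet terms dropped, use $\hat{R}'=0$ from Proposition~\ref{pro2} together with the hypothesis $\hat{\tilde{R}}'=0$ to reach \eqref{5.7}, rule out $\hat{\varphi}=0$ by properness plus Assumption~2, then feed $R'>0$ from $\tilde{R}'\geq 0$ and \eqref{uoy1} into \eqref{5.7} to obtain \eqref{5.9}, and finally drop $\hat{\hat{\hat{\varphi}}}$ to get \eqref{5.9s}) exactly as the paper does. The only cosmetic difference is that you invoke Proposition~\ref{pro2} explicitly, whereas the paper's text leaves the use of $\hat{R}'=0$ somewhat implicit in passing from \eqref{5.6} to \eqref{5.7}.
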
 

Note that the condition of \eqref{5.9s} implies the condition \eqref{uoy1}, as long as \(\hat{\hat{\varphi}}\geq0\). Therefore, under the assumptions of Proposition \ref{pro3} and that \(D^{(n\geq3)}_{\mu}\varphi=0\) and \(\hat{\hat{\varphi}}\geq0\), if the scalar curvature \(\tilde{R}'\) is non-negative, then that of the induced metric must be strictly positive. But notice that, under the assumption \(D^{(n\geq3)}_{\mu}\varphi=0\), by substituting \eqref{5.7} into \eqref{4.19} we can simplify to obtain

\begin{eqnarray}\label{5.10}
\tilde{R}'=-2e^{-2\varphi}\hat{\hat{\varphi}}.
\end{eqnarray}
By assumption \(\tilde{R}'\geq0\), which imposes that \(\hat{\hat{\varphi}}\leq0\). Hence, if we start by imposing that \(\hat{\hat{\varphi}}\geq0\), then we will have that \(\hat{\hat{\varphi}}\) must be zero, in which case \(\tilde{R}'\). Therefore the assumption \(\hat{\hat{\varphi}}\geq0\) will be relaxed. 

As a consequence of Proposition \ref{pro3} we have the following corollary:

\begin{corollary}\label{cor1}
Let \(M\) is a \(4\)-dimensional \(1+1+2\) decomposed spacetime, and \(\mathcal{T}\) an embedded \(3\)-dimensional manifold in \(M\) with Ricci tensor of the form \eqref{3.7}, and suppose \(\mathcal{T}\) admits a proper conformal transformation. Let \(\tilde{h}_{\mu\nu}=e^{2\varphi}h_{\mu\nu}\) be a metric conformal to \(h_{\mu\nu}\) such that the scalar curvature \(\tilde{R}'\) associated to \(\tilde{h}_{\mu\nu}\) is constant and non-negative, and \eqref{uoy1} holds. If \(D^{(n\geq3)}_{\mu}\varphi=0\) and \(\varphi\) is at least a \(C^2\) function, then the conformal factor \(\varphi\) satisfies

\begin{eqnarray}\label{5.11}
\hat{\hat{\varphi}}\leq0,
\end{eqnarray}
along with the constraint

\begin{eqnarray}\label{5.11s}
\hat{\varphi}^2>\ \vline\ \hat{\hat{\varphi}}\ \vline\ .
\end{eqnarray}

\end{corollary}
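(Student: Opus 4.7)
The plan is to observe that this corollary is essentially a direct book-keeping consequence of equation \eqref{5.10} together with the inequality \eqref{5.9s} from Proposition \ref{pro3}, so I would simply chain these two facts together and read off the sign information. No new computation is needed; the work has already been done in the discussion preceding the statement.

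First I would invoke the simplified form of the conformally transformed scalar curvature under the standing hypothesis $D^{(n\geq 3)}_{\mu}\varphi=0$, namely \eqref{5.10}:
\begin{eqnarray*}
\tilde{R}'=-2e^{-2\varphi}\hat{\hat{\varphi}}.
\end{eqnarray*}
Since $e^{-2\varphi}>0$ pointwise on $\mathcal{T}$ and $\tilde{R}'\geq 0$ by hypothesis, this forces $\hat{\hat{\varphi}}\leq 0$, which is the first claimed inequality \eqref{5.11}. Note that constancy of $\tilde{R}'$ is not needed for this step; only non-negativity is. (The constancy of $\tilde{R}'$ enters indirectly through the fact that it is what legitimises the use of relation \eqref{5.10}, which was derived precisely under that assumption via \eqref{5.7}.)

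Next I would apply Proposition \ref{pro3}. Since $\mathcal{T}$ admits a proper conformal transformation, $\tilde{R}'\geq 0$ is constant, \eqref{uoy1} holds, and $D^{(n\geq 3)}_{\mu}\varphi=0$, all of the hypotheses of that proposition are satisfied. Consequently the conformal factor obeys \eqref{5.9s}, i.e.\ $\hat{\hat{\varphi}}+\hat{\varphi}^{2}>0$, which rearranges to $\hat{\varphi}^{2}>-\hat{\hat{\varphi}}$. Combining this with the sign information from the first step, $\hat{\hat{\varphi}}\leq 0$, we have $-\hat{\hat{\varphi}}=|\hat{\hat{\varphi}}|$, yielding
\begin{eqnarray*}
\hat{\varphi}^{2}>|\hat{\hat{\varphi}}|,
\end{eqnarray*}
which is the second constraint \eqref{5.11s}.

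There is essentially no real obstacle here; the only care required is consistency of the hypotheses. In particular, one must check that Proposition \ref{pro3}'s hypothesis $\tilde{R}'\geq 0$ (together with \eqref{uoy1}) is indeed in force in the corollary, which it is since we additionally assume $\tilde{R}'$ constant and non-negative. The $C^{2}$ regularity of $\varphi$ is invoked only to make sense of $\hat{\hat{\varphi}}$ pointwise, so that the two inequalities have classical meaning everywhere on $\mathcal{T}$.
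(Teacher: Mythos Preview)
Your proposal is correct and follows essentially the same approach as the paper: the paper's argument is precisely the discussion immediately preceding the corollary, deriving \(\hat{\hat{\varphi}}\leq 0\) from \eqref{5.10} together with \(\tilde{R}'\geq 0\), and then remarking (in the parenthetical after the corollary) that \eqref{5.11s} is equivalent to \eqref{5.9s} once the sign of \(\hat{\hat{\varphi}}\) is fixed. Your write-up is in fact slightly more explicit than the paper's in spelling out why \(-\hat{\hat{\varphi}}=|\hat{\hat{\varphi}}|\) converts \eqref{5.9s} into \eqref{5.11s}.
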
 
(The constraint \eqref{5.11s} is to ensure that \eqref{5.9s} holds.)

We have seen that, for all metrics conformal to the induced metric \(h_{\mu\nu}\), with constant scalar curvature, the scalar curvatures of the induced metric and the conformal metrics are entirely specified by the conformal factor, provided that the conformal factor satisfies \eqref{5.9} and \eqref{5.11}.

As an example, suppose we consider the following vector field parallel to the preferred spatial direction

\begin{eqnarray}\label{5.12}
X^{\mu}=\gamma e^{\mu}.
\end{eqnarray}
It can easily be checked that if \(\gamma\) satisfies

\begin{subequations}
\begin{align}
A\gamma&=\varphi,\label{peewee1}\\
\dot{\gamma}-\left(\frac{1}{3}\Theta+\Sigma\right)\gamma&=0,\label{peewee2}\\
\hat{\gamma}&=\varphi,\label{5.13}\\
\phi\gamma&=2\varphi,\label{5.14}
\end{align}
\end{subequations}
for \(\varphi\) a smooth and non-constant function, then \(X^{\mu}\) is a proper conformal Killing vector field. Indeed, such \(\gamma\) solves

\begin{eqnarray}\label{5.15}
\hat{\gamma}-\frac{1}{2}\phi\gamma=0,
\end{eqnarray}
with the constraints

\begin{subequations}
\begin{align}
\phi-2A&=0,\label{peewee3}\\
\frac{1}{3}\Theta+\Sigma&=0,\label{peewee4}
\end{align}
\end{subequations}
(where we have noted that on the spacelike slices under consideration here, \(\dot{\gamma}=0\)). Solution to \eqref{5.15} is guaranteed, and takes the form

\begin{eqnarray}\label{5.16}
\gamma=\exp\left(\frac{1}{2}\int \phi\ d\chi\right)>0,
\end{eqnarray}
for \(\phi\neq0\) (if \(\phi=0\), then we simply have the case of a Killing vector that is a constant multiple of \(e^{\mu}\)). The associated conformal factor is given as

\begin{eqnarray}\label{5.17}
\begin{split}
\varphi&=\frac{1}{2}\phi\gamma\nonumber\\
&=\frac{1}{2}\phi\exp\left(\frac{1}{2}\int \phi\ d\chi\right).
\end{split}
\end{eqnarray}
It then follows that, under the assumptions of Proposition \ref{pro3}, if \(\tilde{R}'\) is constant, then, \(R'\) and \(\tilde{R}'\) can explicitly be written in terms of the sheet expansion respectively as

\begin{subequations}
\begin{align}
R'&=\left[\left(\hat{\phi}^2+\phi^2\hat{\phi}+\frac{1}{4}\phi^4\right)\exp\left(\frac{1}{2}\int\phi\ d\chi\right)+2\left(\hat{\hat{\phi}}+\frac{3}{2}\phi\hat{\phi}+\frac{1}{4}\phi^3\right)\right]\exp\left(\frac{1}{2}\int\phi\ d\chi\right),\label{5.18}\\
\tilde{R}'&=-\left(\hat{\hat{\phi}}+\frac{3}{2}\phi\hat{\phi}+\frac{1}{4}\phi^3\right)\exp\left(\frac{1}{2}\int\phi\ d\chi + \phi\exp\left(\frac{1}{2}\int\phi\ d\chi\right)\right),\label{5.19}
\end{align}
\end{subequations}
provided \(\phi\) satisfies

\begin{eqnarray}\label{5.20}
\hat{\hat{\phi}}+\frac{3}{2}\phi\hat{\phi}+\frac{1}{4}\phi^3\leq0.
\end{eqnarray}
The last equation, \eqref{5.20}, is obtained from \eqref{5.11}. It indeed follows that, if \(\phi\) has an upper bound, so does \(R'\) and \(\tilde{R}'\). This is very interesting as it implies that, compactness of these hypersurfaces could simply be specified by the sheet expansion.

\section{Geometry of the hypersurfaces}\label{sec7}

Let us now consider the global geometry of the hypersurfaces under consideration, given that they admit a conformal transformation. We shall use some well known results from Riemannian geometry. We start by stating a result due to Goldberg, Yano and Obata \cite{si1,yan2,mo2,mo3}.

\begin{theorem}\label{thea3}
Let \(N\) be a compact \(n\geq2\)-dimensional Riemannian manifold with constant scalar curvature, and suppose \(N\) admits a proper conformal transformation such that \(\left(\mathcal{L}_{\nu}-2\varphi\right)h_{\mu\nu}=0\). Then a necessary and sufficient condition for \(N\) to be isometric to a sphere is 

\begin{eqnarray}\label{5.21}
\int_{\mathcal{T}}\mathcal{G}_{\mu\nu}\varphi^{\mu}\varphi^{\nu}dV=0,
\end{eqnarray}
where \(dV\) denotes the volume element of \(\mathcal{T}\).
\end{theorem}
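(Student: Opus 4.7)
The plan is to establish necessity and sufficiency separately, with sufficiency being the substantive direction. For necessity, I would argue as follows: if $N$ is isometric to a sphere (of the appropriate constant sectional curvature determined by the given $R'$), then $N$ is an Einstein manifold, i.e., $R'_{\mu\nu} = (R'/n) h_{\mu\nu}$. Consequently $\mathcal{G}_{\mu\nu}\equiv R'_{\mu\nu}-(R'/n)h_{\mu\nu}$ vanishes identically, and the integral in the theorem is trivially zero. (In the three-dimensional case relevant to this paper, vanishing $\mathcal{G}_{\mu\nu}$ is equivalent to $\alpha=\beta$, which matches the Einstein-type condition already identified in Proposition~\ref{pro1}.)

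For sufficiency, assume the integral vanishes. The strategy is to use the conformal Killing equation $D_\mu v_\nu + D_\nu v_\mu = 2\varphi h_{\mu\nu}$ together with the constant scalar curvature hypothesis to produce an integral identity that forces the conformal factor $\varphi$ to satisfy Obata's equation
\begin{eqnarray*}
D_\mu D_\nu \varphi + \tfrac{R'}{n(n-1)}\,\varphi\, h_{\mu\nu}=0.
\end{eqnarray*}
First I would take a divergence of the conformal Killing equation, commute covariant derivatives to pick up the Ricci tensor, and use the Bianchi identity together with $D_\mu R'=0$ to obtain a Laplacian equation of the form $\Delta \varphi = -\tfrac{R'}{n-1}\varphi$ (after absorbing a constant into $\varphi$, which is possible on a compact manifold). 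Next I would consider the Obata tensor $L_{\mu\nu}\equiv D_\mu D_\nu\varphi + \tfrac{R'}{n(n-1)}\varphi\, h_{\mu\nu}$ and compute $\int_{\mathcal{T}} |L|^2\, dV$. Repeated integration by parts on the compact manifold $\mathcal{T}$ (boundary terms vanish) and use of the commutator $[D_\mu,D_\nu]\varphi^\sigma$, combined with the two properties $h^{\mu\nu}\mathcal{G}_{\mu\nu}=0$ and $D^\mu \mathcal{G}_{\mu\nu}=\tfrac{1}{6}D_\nu R'=0$ recorded in \eqref{4.10}--\eqref{4.11}, should yield an identity of the schematic form
\begin{eqnarray*}
\int_{\mathcal{T}} |L|^2\, dV = C\int_{\mathcal{T}} \mathcal{G}_{\mu\nu}\varphi^\mu \varphi^\nu\, dV,
\end{eqnarray*}
for some positive constant $C$ depending only on $n$.

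With this identity in hand, the hypothesis that the right-hand side vanishes forces $L_{\mu\nu}\equiv 0$, i.e.\ $\varphi$ satisfies Obata's equation. Since the transformation is \emph{proper}, $\varphi$ is non-constant, and then the classical theorem of Obata \cite{mo2} applies directly: a complete Riemannian manifold carrying a non-constant solution to this equation is isometric to a round sphere. Combined with compactness of $\mathcal{T}$, this concludes the sufficiency direction.

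The main obstacle I anticipate is producing the precise integral identity linking $\int|L|^2$ to $\int \mathcal{G}_{\mu\nu}\varphi^\mu\varphi^\nu$ with the correct sign and with all cross-terms accounted for. The bookkeeping involves several integration-by-parts steps where the constancy of $R'$ and the divergence-free property of $\mathcal{G}_{\mu\nu}$ must be used at just the right moment to kill extraneous terms; a sign error anywhere would break the argument. A secondary subtlety is justifying the normalization of $\varphi$ so that the Laplacian equation takes the clean eigenvalue form required by Obata's theorem, which relies on the compactness of $\mathcal{T}$ to integrate away the constant of integration.
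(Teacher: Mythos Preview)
The paper does not prove this theorem at all: it is quoted as a known Riemannian-geometry result ``due to Goldberg, Yano and Obata'' with citations to \cite{si1,yan2,mo2,mo3}, and is then used as a black box to derive Theorem~\ref{thea4}. So there is no ``paper's own proof'' to compare against.

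Your proposal is essentially the classical argument found in those cited references, and the broad outline is correct: necessity is immediate because a sphere is Einstein so \(\mathcal{G}_{\mu\nu}\equiv 0\); sufficiency proceeds by producing an integral identity that equates \(\int_{\mathcal T}|L|^2\,dV\) (with \(L_{\mu\nu}=D_\mu D_\nu\varphi-\tfrac{1}{n}\Delta\varphi\,h_{\mu\nu}\), or the equivalent Obata tensor after using the eigenvalue relation) to a positive multiple of \(\int_{\mathcal T}\mathcal{G}_{\mu\nu}\varphi^\mu\varphi^\nu\,dV\), whence the hypothesis forces \(L_{\mu\nu}=0\) and Obata's theorem \cite{mo2} finishes the job. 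The two caveats you flag are the right ones: the sign and coefficient in the key integral identity require care (the standard derivations use the Ricci identity and \(D^\mu\mathcal{G}_{\mu\nu}=0\) exactly as you indicate), and the passage from the raw divergence relation \(\Delta\varphi=-\tfrac{R'}{n-1}\varphi+\text{const}\) to the homogeneous eigenvalue equation needs the observation that on a compact manifold one may replace \(\varphi\) by \(\varphi-\bar\varphi\) without affecting the trace-free part of \(D_\mu D_\nu\varphi\).
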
 

Now, the integrand in \eqref{5.21} is given in \eqref{4.21} as (we rewrite it here for immediate reference).

\begin{eqnarray*}
\frac{2}{3}\left(\alpha-\beta\right)\hat{\varphi}^2.
\end{eqnarray*}
Clearly, if \(\mathcal{T}\) is of Einstein type, then \eqref{5.21} always holds. However, if \(\mathcal{T}\) is not Einstein, then we have already shown that \(\left(\alpha-\beta\right)>0\) (here we must insist that the sheet expansion on \(\mathcal{T}\) is non-negative for otherwise \(\left(\alpha-\beta\right)\) can be a negative constant), and since the transformation is proper, \(\hat{\varphi}^2>0\). Hence, \((2/3)\left(\alpha-\beta\right)\hat{\varphi}^2>0\). This allows us to conclude the following:

\begin{theorem}\label{thea4}
Let \(M\) be a \(4\)-dimensional \(1+1+2\) decomposed spacetime, and \(\mathcal{T}\) a compact embedded \(3\)-dimensional manifold in \(M\) with Ricci tensor of the form \eqref{3.7}. Furthermore, suppose \(\mathcal{T}\) admits a proper conformal transformation. Let \(\tilde{h}_{\mu\nu}=e^{2\varphi}h_{\mu\nu}\) be a metric conformal to \(h_{\mu\nu}\) such that the scalar curvature \(\tilde{R}'\) associated to \(\tilde{h}_{\mu\nu}\) is non-negative. If \(\mathcal{T}\) is Einstein, then \(\mathcal{T}\) is isometric to the \(3\)-sphere. 

\end{theorem}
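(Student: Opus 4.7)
The plan is to apply Theorem \ref{thea3} directly to $\mathcal{T}$, which reduces the task to verifying three hypotheses: that $\mathcal{T}$ has constant scalar curvature, that it admits a proper conformal transformation (both already listed in the statement), and that the integral condition
\begin{eqnarray*}
\int_{\mathcal{T}} \mathcal{G}_{\mu\nu}\varphi^{\mu}\varphi^{\nu}\, dV = 0
\end{eqnarray*}
holds. Compactness and the proper conformal transformation are given by hypothesis, so only the constancy of $R'$ and the vanishing of the integral need to be established.

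For constancy of $R'$, I would invoke Proposition \ref{pro2}, which yields $\hat{R}' = 0$ with no appeal to compactness; combined with the standing convention $\delta_{\mu}\psi = 0$ from Section \ref{sec6} this gives $D_{\mu}R' = 0$. Moreover, the hypothesis $\tilde{R}' \geq 0$ together with the tacitly assumed inequality \eqref{uoy1} forces $R'>0$ by inspection of \eqref{4.19}, which is consistent with an eventual identification with a positively curved sphere.

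The vanishing of the integral is the key step, and in the Einstein case it is actually pointwise. Using the covariant expression \eqref{4.9},
\begin{eqnarray*}
\mathcal{G}_{\mu\nu} = \tfrac{1}{3}(\alpha-\beta)\bigl(2e_{\mu}e_{\nu} - N_{\mu\nu}\bigr),
\end{eqnarray*}
the Einstein hypothesis $\alpha = \beta$ collapses $\mathcal{G}_{\mu\nu}$ to zero identically, and hence $\mathcal{G}_{\mu\nu}\varphi^{\mu}\varphi^{\nu}$ vanishes pointwise on $\mathcal{T}$ (this also matches \eqref{4.21}, where the prefactor $\alpha-\beta$ kills the integrand regardless of the behaviour of $\hat{\varphi}$ and $\delta_{\mu}\varphi$). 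The integral condition of Theorem \ref{thea3} is therefore trivially satisfied, and that theorem delivers the conclusion that $\mathcal{T}$ is isometric to a sphere; since $\dim\mathcal{T} = 3$, it must be isometric to the 3-sphere.

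There is no substantive obstacle in this Einstein case: the real work is absorbed into Proposition \ref{pro2} and into Theorem \ref{thea3}. The only point that requires some care is checking the chain $\tilde{R}' \geq 0 \Rightarrow R' > 0$ so that the sphere radius is real, and ensuring that the tacit assumption $\delta_{\mu}\varphi = 0$ is invoked consistently. The genuinely hard version of the problem, where $\mathcal{T}$ is not of Einstein type, requires controlling the sign of $(\alpha - \beta)\hat{\varphi}^2$ and is treated separately in later theorems of the paper.
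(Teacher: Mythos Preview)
Your proposal is correct and follows essentially the same route as the paper: invoke Proposition \ref{pro2} for constancy of $R'$, observe that the Einstein hypothesis $\alpha=\beta$ kills $\mathcal{G}_{\mu\nu}$ (and hence the integrand of \eqref{5.21}) pointwise, and then apply Theorem \ref{thea3}. The paper's argument is terser but structurally identical; your explicit check that $R'>0$ via $\tilde{R}'\geq 0$ and \eqref{uoy1} is a welcome clarification that the paper leaves implicit.
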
 

Indeed, if \(\mathcal{T}\) is non-Einstein, it is clear that the integral \eqref{5.21} can be negative or positive, in which case the conclusion of Theorem \ref{thea4} cannot follow. However, under certain conditions the global geometry of a non-Einstein type \(\mathcal{T}\) can be specified. For the non-Einstein case, with \(\phi=0\) and \(\hat{\alpha}\neq0\), one simply require that the following condition be satisfied in order for the conclusion of Theorem \ref{thea4} to hold on \(\mathcal{T}\):

\begin{eqnarray}\label{hft}
\alpha-\beta=c,
\end{eqnarray}
for some positive constant \(c\). 

The case with non-vanishing sheet is a little more involving. Let us recall the following result due to Goldberg \cite{si2}, Obata \cite{mo2,mo3} and Yano \cite{yan2}.

\begin{theorem}\label{thea5}
Let \(N\) be a compact \(n\geq2\)-dimensional Riemannian manifold with constant scalar curvature, and suppose \(N\) admits a proper conformal transformation such that \(\left(\mathcal{L}_{\nu}-2\varphi\right)h_{\mu\nu}=0\) such that \(\tilde{R}'=R'\), where \(\tilde{R}'\) is the scalar curvature associated to the conformal metric. If

\begin{eqnarray}\label{5.22}
\int_{\mathcal{T}} \nu^{1-n}\mathcal{G}_{\mu\nu}\nu^{\mu}\nu^{\nu}dV\geq0,
\end{eqnarray}
where \(dV\) denotes the volume element of \(\mathcal{T}\), then \(N\) is isometric to a sphere.
\end{theorem}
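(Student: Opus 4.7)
The plan is a Bochner-type integral identity argument: derive a formula on the compact manifold \(N\) whose integrand is the squared norm of the trace-free Hessian of \(\nu=e^{-\varphi}\), weighted by \(\nu^{1-n}\), and equate it up to a positive constant to minus the integrand in \eqref{5.22}. The hypothesis then forces the trace-free Hessian to vanish identically, and Obata's classical rigidity theorem yields isometry with a round sphere.

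First I would switch variables from \(\varphi\) to \(\nu=e^{-\varphi}\), in which the hypothesis is naturally phrased. Using the standard \(n\)-dimensional conformal change formula for the scalar curvature together with \(\nabla_\mu\varphi=-\nu^{-1}\nabla_\mu\nu\) and \(\Delta\varphi=-\nu^{-1}\Delta\nu+\nu^{-2}|\nabla\nu|^2\), the constancy assumption \(\tilde R'=R'\) collapses to a quadratic Poisson-type relation of the schematic form
\[
\nu\,\Delta\nu \;=\; \tfrac{n}{2}|\nabla\nu|^2 \;+\; \tfrac{R'}{2(n-1)}(1-\nu^2).
\]
This provides a clean expression for \(\Delta\nu\) that will cancel against curvature terms below.

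Next I would introduce \(L_{\mu\nu}=\nabla_\mu\nabla_\nu\nu-\tfrac{1}{n}(\Delta\nu)h_{\mu\nu}\) and compute \(\int_N \nu^{1-n}\,L_{\mu\nu}L^{\mu\nu}\,dV\) by repeated integration by parts. The ingredients used are the Ricci commutator (which introduces \(R'_{\mu\nu}\nabla^\mu\nu\) when swapping \(\nabla_\mu\) past \(\Delta\)), the decomposition \(R'_{\mu\nu}=\mathcal{G}_{\mu\nu}+\tfrac{R'}{n}h_{\mu\nu}\) extending \eqref{4.8} to dimension \(n\), the contracted Bianchi identity \eqref{4.11} (which collapses because \(R'\) is constant), and the Poisson-type relation just derived. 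The weight \(\nu^{1-n}\) is chosen precisely so that the divergence contributions telescope and the algebraic ``junk'' cancels, leaving an identity of the form
\[
\int_N \nu^{1-n}\,L_{\mu\nu}L^{\mu\nu}\,dV \;=\; -c_n \int_N \nu^{1-n}\,\mathcal{G}_{\mu\nu}\nu^\mu\nu^\nu\,dV
\]
for a positive dimensional constant \(c_n\). The left-hand side is pointwise non-negative while the right-hand side is non-positive by \eqref{5.22}, forcing \(L_{\mu\nu}\equiv 0\); substituting the Poisson-type expression for \(\Delta\nu\) back into \(\nabla_\mu\nabla_\nu\nu=\tfrac{1}{n}(\Delta\nu)h_{\mu\nu}\) recasts this as Obata's equation \(\nabla_\mu\nabla_\nu F=-k^2F\,h_{\mu\nu}\) for an affine shift \(F\) of \(\nu\) and a positive constant \(k\). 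Non-constancy of \(F\) follows from properness of the transformation, so Obata's theorem concludes that \((N,h)\) is isometric to a round sphere of radius \(1/k\).

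I expect the main obstacle to be the dimensional bookkeeping in the integration-by-parts step: the exponent \(\nu^{1-n}\) is dictated by a delicate cancellation between derivatives falling on the weight and terms produced by commuting covariant derivatives, so any miscounted factor of \((n-1)\) or \(n\) breaks the clean reduction to the integrand of \eqref{5.22}. A secondary technical subtlety is verifying that the constant \(k^2\) appearing in the Obata equation is genuinely positive; this amounts to tracking signs through the Poisson-type identity and ruling out the degenerate flat case, which here is excluded by properness of the conformal transformation together with compactness of \(N\).
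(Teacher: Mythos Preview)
The paper does not prove this statement: Theorem~\ref{thea5} is explicitly introduced with ``Let us recall the following result due to Goldberg, Obata and Yano'' and is cited from the literature (references \cite{si2,mo2,mo3,yan2}) without proof. It is then used as a black box to derive Theorem~\ref{thea6}.

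Your proposal is essentially the classical Yano--Obata argument from those original references: rewrite the conformal scalar curvature relation in terms of \(\nu=e^{-\varphi}\), form the weighted integral of the squared trace-free Hessian of \(\nu\), integrate by parts using the Ricci identity and the contracted Bianchi identity, and exploit the weight \(\nu^{1-n}\) to collapse everything to the integrand of \eqref{5.22}. The sign comparison then forces the trace-free Hessian to vanish, and Obata's rigidity theorem finishes. This is the correct strategy and matches what the cited sources do, so there is nothing to compare against in the present paper itself. Your caveats about dimensional bookkeeping and the positivity of the Obata constant are the right places to be careful, but these are resolved in the original works you are effectively reconstructing.
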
 

Firstly, for the case considered in this work, the integrand of \eqref{5.22} reduces to that of \eqref{5.21} as was seen from \eqref{4.21} and \eqref{4.23}. Clearly, as long as the sheet expansion stays positive all over \(\mathcal{T}\), the condiition \eqref{5.22} always holds, since the integrand is either zero or positive, and zero if and only if \(\mathcal{T}\) is Einstein. Now, we have assumed non-negativity of \(\tilde{R}'\). If \eqref{uoy1} holds, then, from \eqref{4.19} the condition that \(\tilde{R}'=R'\) requires the following restriction in terms of the conformal factor \(\varphi\):

\begin{eqnarray}\label{5.23}
\left(1-e^{2\varphi}\right)>0,
\end{eqnarray}
(recall that the condition \eqref{uoy1} ensures a strictly positive \(R'\)), where we are assuming that the transformation is proper. Hence, we must have \(\varphi<0\). 

Also, we point out that we must insist that the sheet expansion is non-zero if \eqref{hft} is not satisfied, since otherwise we would have \(\alpha-\beta\) as an arbitrary constant, as was earlier discussed. In the case that this constant is negative, then the result fails since the integrand of \eqref{5.22} is strictly negative. This then leads us to the following result.

\begin{theorem}\label{thea6}
Let \(M\) be a \(4\)-dimensional \(1+1+2\) decomposed spacetime, and \(\mathcal{T}\) a compact embedded \(3\)-dimensional manifold in \(M\) with Ricci tensor of the form \eqref{3.7}, with nowhere vanishing sheet expansion. Furthermore, suppose \(\mathcal{T}\) admits a proper conformal transformation. If \(\tilde{h}_{\mu\nu}=e^{2\varphi}h_{\mu\nu}\) is a metric conformal to \(h_{\mu\nu}\) such that the scalar curvature \(\tilde{R}'\) associated to \(\tilde{h}_{\mu\nu}\) is non-negative and \(\tilde{R}'=R'\), with \(\varphi<0\) and \eqref{uoy1} satisfied, then \(\mathcal{T}\) is isometric to the \(3\)-sphere. 

\end{theorem}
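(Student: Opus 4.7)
The plan is to reduce the theorem to an application of Theorem \ref{thea5}, so the work is to verify its hypotheses for this class of hypersurfaces and then to check that the integrand in \eqref{5.22} has a definite sign. First I would invoke Proposition \ref{pro2} to conclude that the induced scalar curvature $R'$ on $\mathcal{T}$ is constant; under the hypothesis $\tilde{R}'=R'$ the conformal scalar curvature is then also constant, so the $\left(\mathcal{T},\tilde{h}_{\mu\nu}\right)$ satisfies the constant-scalar-curvature requirement of Theorem \ref{thea5}.

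Next I would compute the integrand of \eqref{5.22} in $1+1+2$ covariant form. With $n=3$ we have $\nu^{1-n}=\nu^{-2}$, and identity \eqref{4.23} immediately gives
\begin{equation*}
\nu^{-2}\,\mathcal{G}_{\mu\nu}\nu^{\mu}\nu^{\nu}=\mathcal{G}_{\mu\nu}\varphi^{\mu}\varphi^{\nu}.
\end{equation*}
Using the standing assumption $\delta_{\mu}\varphi=0$, expression \eqref{4.21} collapses to $(2/3)(\alpha-\beta)\hat{\varphi}^{2}$. To sign this, I would invoke the analysis surrounding \eqref{4.12}--\eqref{4.14}: since $\mathcal{T}$ has nowhere vanishing sheet expansion $\phi$, the relation \eqref{4.14} forces $\alpha-\beta>0$ pointwise. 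Combined with $\hat{\varphi}^{2}\geq 0$ (with strict inequality on an open set, as the transformation is proper), the integrand of \eqref{5.22} is pointwise non-negative, so the integral inequality of Theorem \ref{thea5} holds.

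What remains is to confirm that the hypotheses $\tilde{R}'\geq 0$, $\tilde{R}'=R'$, $\varphi<0$ and \eqref{uoy1} are mutually compatible, i.e.\ that the setting of Theorem \ref{thea5} is actually realized. Substituting $\delta_{\mu}\varphi=0$ into \eqref{4.19} and equating $\tilde{R}'=R'$ one obtains
\begin{equation*}
R'\bigl(1-e^{2\varphi}\bigr)=2\bigl(\hat{\varphi}^{2}+2\hat{\hat{\varphi}}\bigr),
\end{equation*}
whose right-hand side is strictly positive by \eqref{uoy1}; combined with $R'>0$ (itself guaranteed by $\tilde{R}'\geq 0$ together with \eqref{uoy1}, as established in Section \ref{sec4}), this forces $1-e^{2\varphi}>0$, which is precisely the assumption $\varphi<0$. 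Having verified the compatibility, Theorem \ref{thea5} then yields that $\mathcal{T}$ is isometric to the $3$-sphere.

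The main delicacy in this plan is the compatibility verification just described rather than the integral computation itself: if one dropped $\varphi<0$, equating $\tilde{R}'$ with $R'$ under the positivity of $R'$ forced by \eqref{uoy1} would be inconsistent, and if one allowed $\phi$ to vanish somewhere then \eqref{hh} would permit $\alpha-\beta$ to be a negative constant, flipping the sign of the integrand and invalidating the application of Theorem \ref{thea5}. These are precisely the two hypotheses imposed in the statement, and they are each used once in the argument outlined above.
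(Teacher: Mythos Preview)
Your proposal is correct and follows essentially the same route as the paper: reduce to Theorem \ref{thea5}, use \eqref{4.23} to rewrite the integrand as $(2/3)(\alpha-\beta)\hat{\varphi}^{2}$, invoke \eqref{4.14} with nowhere vanishing $\phi$ to secure $\alpha-\beta>0$, and verify via \eqref{4.19} that the hypotheses $\tilde{R}'=R'$, $\tilde{R}'\geq 0$, \eqref{uoy1}, and $\varphi<0$ are mutually consistent. Your explicit appeal to Proposition \ref{pro2} for the constancy of $R'$ and your displayed compatibility equation make the logic slightly more transparent than the paper's running discussion, but the argument is the same.
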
 

Again it is very important to emphasize that, without the condition \(\varphi<0\) the conclusion of the above theorem is not possible.

Considering Theorem \ref{thea6} in context of the case discussed earlier, where the transformation was induced by the vector field \(X^{\mu}=\gamma e^{\mu}\) parallel to \(e^{\mu}\), we see that the condition on the conformal factor, \(\varphi<0\), is equivalent to the statement that the hypersurface \(\mathcal{T}\) has negative sheet expansion. So, for example, in the case that \(\phi\) is continuously decreasing along \(e^{\mu}\), \eqref{5.20} is valid as long as

\begin{eqnarray}\label{5.24}
\Big|\hat{\hat{\phi}}+\frac{1}{4}\phi^3\Big|\geq\frac{3}{2}\phi\hat{\phi}.
\end{eqnarray}

Now, let us consider the case discussed earlier where the scalar curvature associated to the conformally transformed metric is also constant (where the condition \(D^{(n\geq3)}_{\mu}\varphi=0\) is imposed). In this case, \(\tilde{R}'=R'\) can be specified via the following second order non-linear equation in the conformal factor \(\varphi\):

\begin{eqnarray}\label{5.25}
\hat{\hat{\varphi}}+\left(1+e^{-2\varphi}\right)^{-1}\hat{\varphi}^2=0.
\end{eqnarray}
As we are interested in proper conformal transformations, we will assume that \(\hat{\hat{\varphi}}\neq0\), since from \eqref{5.25} this would force \(\hat{\varphi}=0\), which in turn would imply that the transformation is homothetic. Hence, by Corollary \ref{cor1} we must have

\begin{eqnarray}\label{5.26}
\hat{\hat{\varphi}}<0.
\end{eqnarray}
But this would require that 

\begin{eqnarray}\label{5.26s1}
\left(1+e^{-2\varphi}\right)>0,
\end{eqnarray} 
which will always hold. However, it will appear that we have encountered a problem here: by \eqref{5.23} we have that \(e^{-2\varphi}>1\). From \eqref{5.26s1}, this gives the estimate

\begin{eqnarray}\label{5.26s2} 
\left(1+e^{-2\varphi}\right)>2.
\end{eqnarray}
Thus, we have from \eqref{5.25} that

\begin{eqnarray}\label{5.26s3}
\begin{split} 
\hat{\varphi}^2&=-\left(1+e^{-2\varphi}\right)\hat{\hat{\varphi}}\\
&<-2\hat{\hat{\varphi}},
\end{split}
\end{eqnarray}
and hence

\begin{eqnarray}\label{5.26s4} 
\hat{\varphi}^2+2\hat{\hat{\varphi}}<0.
\end{eqnarray}
The above equation means that the strict positivity of the scalar curvature of the induced metric is not guaranteed by \eqref{uoy1}. But notice that, if \(\hat{\tilde{R}}'=0\) and \(D^{(n\geq3)}_{\mu}\varphi\), then to ensure strict positivity of \(R'\) requires the following condition be satisfied:

\begin{eqnarray}\label{5.26s5} 
\hat{\hat{\varphi}}+\frac{3}{4}\hat{\varphi}^2 > 0,
\end{eqnarray}
and since \(\hat{\hat{\varphi}}<0\) we further require that

\begin{eqnarray}\label{5.26s6} 
\frac{3}{4}\hat{\varphi}^2>\ \vline\ \hat{\hat{\varphi}}\ \vline\ .
\end{eqnarray}

In addition, notice that \eqref{5.26s2} implies that \(\varphi<0\). It therefore follows that the conditions \eqref{5.25}, \eqref{5.26}, \eqref{5.26s5}, and \eqref{5.26s6} ensure that \(\varphi<0\) and \(\tilde{R}'=R'\). This then allows us to state the following result:

\begin{theorem}\label{thea7}
Let \(M\) be a \(4\)-dimensional \(1+1+2\) decomposed spacetime, and \(\mathcal{T}\) a compact embedded \(3\)-dimensional manifold in \(M\) with Ricci tensor of the form \eqref{3.7}, with nowhere vanishing sheet expansion. Furthermore, suppose \(\mathcal{T}\) admits a proper conformal transformation and \(\tilde{h}_{\mu\nu}=e^{2\varphi}h_{\mu\nu}\) is a metric conformal to \(h_{\mu\nu}\) such that the scalar curvature \(\tilde{R}'\) associated to \(\tilde{h}_{\mu\nu}\) is constant, \eqref{uoy1} is satisfied and \(D^{(n\geq3)}_{\mu}\varphi=0\). If the conditions
\begin{enumerate}

\item \(\hat{\hat{\varphi}}<0\); 

\item \(\hat{\hat{\varphi}}+\left(1+e^{-2\varphi}\right)^{-1}\hat{\varphi}^2=0\); and

\item \(\hat{\hat{\varphi}}+\frac{3}{4}\hat{\varphi}^2 > 0\)

\end{enumerate}
hold, then \(\mathcal{T}\) is isometric to the \(3\)-sphere. 

\end{theorem}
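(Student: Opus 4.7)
The plan is to obtain Theorem \ref{thea7} as a direct application of Theorem \ref{thea5} (Goldberg--Obata--Yano) to $\mathcal{T}$ and its conformal rescaling $\tilde{h}_{\mu\nu}$. Compactness, properness of the transformation, and constancy of $\tilde{R}'$ are immediate from the hypotheses, while Proposition \ref{pro2} provides constancy of $R'$. The two remaining ingredients to secure are (a) the matching $\tilde{R}' = R'$ and (b) non-negativity of the integral in \eqref{5.22}.

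For (a), I would read condition (2) as precisely the algebraic content of $\tilde{R}' = R'$. Under $D^{(n\geq 3)}_\mu\varphi = 0$ and $\delta_\mu\varphi = 0$, equation \eqref{5.7} collapses to $R' = 2(\hat{\hat{\varphi}} + \hat{\varphi}^2)$, while \eqref{5.10} gives $\tilde{R}' = -2e^{-2\varphi}\hat{\hat{\varphi}}$. Equating these two expressions and dividing through by the (positive) factor $-(1+e^{-2\varphi})$ yields exactly $\hat{\hat{\varphi}} + (1+e^{-2\varphi})^{-1}\hat{\varphi}^2 = 0$. So assumption (2) is equivalent, under the standing simplifications, to $\tilde{R}' = R'$.

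For (b), I would use \eqref{4.21} and \eqref{4.23}, which for $n=3$ and $\delta_\mu\varphi = 0$ reduce the integrand of \eqref{5.22} to $\tfrac{2}{3}(\alpha-\beta)\hat{\varphi}^2$. Since the sheet expansion is nowhere vanishing, \eqref{4.14} forces $\alpha-\beta > 0$ throughout $\mathcal{T}$, and the properness of the transformation ensures $\hat{\varphi}$ is not identically zero. Hence the integrand is pointwise non-negative, so the integral is $\geq 0$, and Theorem \ref{thea5} delivers the conclusion that $\mathcal{T}$ is isometric to the $3$-sphere.

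Conditions (1) and (3) play a consistency role rather than a new geometric one: condition (1) aligns with Corollary \ref{cor1} and, via \eqref{5.10}, secures $\tilde{R}' > 0$; condition (3), combined with condition (2), forces $\varphi < 0$ through the estimate (\ref{5.26s2}) and guarantees that $R'$ is strictly positive, as is required for the limit geometry to be a genuine round $3$-sphere rather than a degenerate flat one. The main obstacle is step (a): one must extract condition (2) from the two independently derived formulas for $R'$ and $\tilde{R}'$ without circularity, since \eqref{5.7} itself already encodes the constancy of $\tilde{R}'$. Once this algebraic identification is in hand, step (b) is a one-line positivity argument and the remainder is a direct invocation of Theorem \ref{thea5}.
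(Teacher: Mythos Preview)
Your proposal is correct and follows essentially the same route as the paper: identify condition (2) as the relation $\tilde{R}'=R'$ (this is the paper's equation \eqref{5.25}), use the nowhere-vanishing sheet expansion together with \eqref{4.14} to force $\alpha-\beta>0$ so that the integrand in \eqref{5.22} is non-negative, and then invoke Theorem~\ref{thea5}. The paper packages the last step through Theorem~\ref{thea6} rather than citing Theorem~\ref{thea5} directly, and your reading of conditions (1) and (3) as positivity/consistency constraints matches the paper's discussion leading up to the theorem (one minor slip: the factor you divide by is $(1+e^{-2\varphi})$, which is positive, not $-(1+e^{-2\varphi})$; and $\varphi<0$ in the paper is obtained from \eqref{5.23} via \eqref{uoy1} rather than from (2) and (3) alone).
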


\section{Application to locally rotationally symmetric spacetimes}\label{sec8}

In this section, we apply some of our results of the previous section to locally rotationally symmetric (LRS) spacetimes. Indeed, LRS spacetimes are \(1+1+2\) decomposed with all vector and tensor quantities vanishing, and hence their constant time spacelike slices are precisely of the form \eqref{3.7}, and hence the results herein apply given that the assumptions of the various propositions and theorems hold on these slices. (The slices will similarly be denoted by \(\mathcal{T}\) as has been done throughout this work.) We are therefore also interested in some constraints on the various scalar quantities on spacelike slices, given the results we have obtained. For simplicity, we will consider those LRS solutions with vanishing shear. First we define what these solutions are.

\begin{definition}
A spacetime \(M\) is said to be \textbf{locally rotationally symmetric (LRS)} if, at each point \(p\in M\), there exists a continuous isotropy group generating a multiply transitive isometry group on \(M\) \cite{ggff2}, with the metric given by

\begin{eqnarray}\label{jan29191}
\begin{split}
ds^2&=-A^2dt^2 + B^2d\chi^2 + F^2 dy^2 + \left[\left(F\bar{D}\right)^2+ \left(Bh\right)^2 - \left(Ag\right)^2\right]dz^2+ \left(A^2gdt - B^2hd\chi\right)dz,
\end{split}
\end{eqnarray}
where \(A,B,F\) are functions of \(t\) and \(\chi\), \(\bar{D}^2\) is a function of \(y\) and \(k\), with \(k\) specifying the geometry of the \(2\)-surfaces, and \(g,h\) being functions of \(y\) only. 
\end{definition}
These spacetimes are specified by the below set of scalar variables \cite{cc1}
\begin{eqnarray*}
\lbrace{A,\Theta,\phi, \Sigma, \mathcal{E}, \mathcal{H}, \rho, p, \Pi, Q, \Omega, \xi\rbrace}. 
\end{eqnarray*}
The well known LRS II class of spacetimes generalizing spherically symmetric solutions to the Einstein field equations, is the limiting case of the general LRS class of spacetimes defined above, with \(g=h=0\).

For this section, we will assume that the spacetimes have non-vanishing sheet expansion so as to apply earlier obtained results. 

The field equations for these spacetimes can be written as a collection of evolution and propagation equations and their mixture \cite{cc1}. For the particular case considered here, these equations are

\begin{itemize}

\item \textit{Evolution}
\begin{subequations}
\begin{align}
\frac{2}{3}\dot{\Theta}-\dot{\Sigma}&=\mathcal{A}\phi- \frac{1}{2}\left(\frac{2}{3}\Theta-\Sigma\right)^2 - 2\Omega^2 + \mathcal{E}-\frac{1}{2}\Pi - \frac{1}{3}\left(\rho+3p\right),\label{6.1}\\
\dot{\phi}&=\left(\frac{2}{3}\Theta-\Sigma\right)\left(\mathcal{A}-\frac{1}{2}\phi\right) + 2\xi\Omega+Q,\label{6.2}\\
\dot{\xi}&=-\frac{1}{2}\left(\frac{2}{3}\Theta-\Sigma\right)\xi + \left(\mathcal{A}-\frac{1}{2}\phi\right)\Omega,\label{6.3}\\
\dot{\Omega}&=\mathcal{A}\xi-\left(\frac{2}{3}\Theta-\Sigma\right)\Omega,\label{6.4}\\
\dot{\mathcal{H}}&=-3\xi\mathcal{E}-\frac{3}{2}\left(\frac{2}{3}\Theta-\Sigma\right)\mathcal{H}+\Omega Q,\label{6.5}\\
\dot{\mathcal{E}}-\frac{1}{3}\dot{\rho}+\frac{1}{2}\dot{\Pi}&=3\xi\mathcal{H}+\frac{1}{2}\phi Q+\left(\frac{2}{3}\Theta-\Sigma\right)\left[\frac{1}{2}\left(\rho+p\right)-\frac{3}{2}\left(\mathcal{E}+\frac{1}{6}\Pi\right)\right],\label{6.6}
\end{align}
\end{subequations}
\item \textit{Propagation}
\begin{subequations}
\begin{align}
\frac{2}{3}\hat{\Theta}-\hat{\Sigma}&=\frac{3}{2}\phi\Sigma+2\xi\Omega+Q,\label{6.7}\\
\hat{\phi}&=-\frac{1}{2}\phi^2 + \left(\frac{1}{3}\Theta+\Sigma\right)\left(\frac{2}{3}\Theta-\Sigma\right)+2\xi^2-\frac{2}{3}\rho-\left(\mathcal{E}+\frac{1}{2}\Pi\right),\label{6.8}\\
\hat{\xi}&=-\phi\xi + \left(\frac{1}{3}\Theta+\Sigma\right)\Omega,\label{6.9}\\
\hat{\Omega}&=\left(\mathcal{A}-\phi\right)\Omega,\label{6.10}\\
\hat{\mathcal{H}}&=-\left(3\mathcal{E}+\rho+p-\frac{1}{2}\Pi\right)\Omega-3\phi\mathcal{H}-\xi Q,\label{6.11}\\
\hat{\mathcal{E}}-\frac{1}{3}\hat{\rho}+\frac{1}{2}\hat{\Pi}&=-\frac{3}{2}\phi\left(\mathcal{E}+\frac{1}{2}\Pi\right)+3\Omega\mathcal{H}-\frac{1}{2}\left(\frac{2}{3}\Theta-\Sigma\right) Q,\label{6.12}
\end{align}
\end{subequations}
\item \textit{Evolution/Propagation}
\begin{subequations}
\begin{align}
\hat{\mathcal{A}}-\dot{\Theta}&=-\left(\mathcal{A}+\phi\right)\mathcal{A}-\frac{1}{3}\Theta^2+\frac{3}{2}\Sigma^2-2\Omega^2+\frac{1}{2}\left(\rho+3p\right),\label{6.13}\\
\dot{\rho}+\hat{Q}&=-\Theta\left(\rho+p\right)-\left(2\mathcal{A}+\phi\right)Q-\frac{3}{2}\Sigma\Pi,\label{6.14}\\
\dot{Q}+\hat{p}+\hat{\Pi}&=-\left(\rho+p\right)\mathcal{A}-\left(\mathcal{A}+\frac{3}{2}\phi\right)\Pi-\left(\frac{4}{3}\Theta+\Sigma\right) Q,\label{6.15}
\end{align}
\end{subequations}
\item \textit{Constraint}
\begin{eqnarray}\label{6.16}
\mathcal{H}=3\Sigma\xi-\left(2\mathcal{A}-\phi\right)\Omega.
\end{eqnarray}

\end{itemize}

As we are concerned with each spacelike slice \(\mathcal{T}\) in the spacetime, the evolution equations \eqref{6.1} to \eqref{6.6} are the following constraints (the dot derivatives vanish), after some rearrangements:

\begin{subequations}
\begin{align}
0&=\mathcal{A}\phi-\frac{1}{2}\left( \frac{2}{3}\Theta-\Sigma\right)^2 - 2\Omega^2 + \mathcal{E}-\frac{1}{2}\Pi - \frac{1}{3}\left(\rho+3p\right),\label{6.17}\\
0&=\frac{1}{2}\left(\frac{2}{3}\Theta-\Sigma\right)\left(2\mathcal{A}-\phi\right) + 2\xi\Omega+Q,\label{6.18}\\
0&=-\frac{1}{2}\left(\frac{2}{3}\Theta-\Sigma\right)\xi + \frac{1}{2}\left(2\mathcal{A}-\phi\right)\Omega,\label{6.19}\\
0&=\mathcal{A}\xi-\left(\frac{2}{3}\Theta-\Sigma\right)\Omega,\label{6.20}\\
0&=-3\xi\mathcal{E}-\frac{3}{2}\left(\frac{2}{3}\Theta-\Sigma\right)\mathcal{H}+\Omega Q,\label{6.21}\\
0&=3\xi\mathcal{H}+\frac{1}{2}\phi Q+\left(\frac{2}{3}\Theta-\Sigma\right)\left[\frac{1}{2}\left(\rho+p\right)-\frac{3}{2}\left(\mathcal{E}+\frac{1}{6}\Pi\right)\right],\label{6.22}
\end{align}
\end{subequations}
on \(\mathcal{T}\). 

Now, assume that \(\mathcal{T}\) a conformal Killing vector field of the form \eqref{5.12}. Then, using \eqref{peewee3}, \eqref{6.19} becomes

\begin{eqnarray}\label{7.a}
0=\left(\frac{2}{3}\Theta-\Sigma\right)\xi.
\end{eqnarray}
Hence, either \(\xi=0\) or \((2/3)\Theta-\Sigma=0\). If we assume the latter, then from \eqref{6.20} we have that

\begin{eqnarray}\label{7.b}
0=\mathcal{A}\xi.
\end{eqnarray}
We rule out \(\mathcal{A}=0\) since \eqref{peewee3} would imply \(\phi=0\Longrightarrow\varphi=0\), the case that the conformal Killing vector is just a Killing vector (note here that the results we are interested in relies on the assumption that the conformal Killing vector is proper.). Hence, from \eqref{7.b} we must have \(\xi=0\). However, the case \((2/3)\Theta-\Sigma=0\), using \eqref{peewee4}, implies that \(\mathcal{T}\) is time-symmetric (\(\Theta=\Sigma=0\)), which is a further severe restriction on \(\mathcal{T}\). So, from \eqref{7.a} we shall immediately assume that \(\xi=0\) and \((2/3)\Theta-\Sigma\neq0\). Of course then \eqref{6.20} gives \(\Omega=0\). Hence, we are in essence working with class II locally rotationally symmetric solutions (the magnetic Weyl scalar \(\mathcal{H}\) also vanishes, as can be seen from the constraint \eqref{6.16}). We can therefore begin our analysis independent of whether \(\mathcal{T}\) is of Einstein type or not (\(\mathcal{T}\) can be of Einstein type with vanishing twist).

Let us look at the restrictions that \eqref{peewee3} and \eqref{peewee4} impose on \(\mathcal{T}\). Firstly, in this case, \(\mathcal{T}\) can neither radiate nor absorb radiation using \eqref{6.18}. From \eqref{6.22} we therefore have 

\begin{eqnarray}\label{7.c}
\left(\rho+p\right)=3\left(\mathcal{E}+\frac{1}{6}\Pi\right),
\end{eqnarray}
which should be satisfied at all points of \(\mathcal{T}\).

We recall that \(\Theta=0\Longrightarrow \Sigma=0\) from \eqref{peewee4}. Hence, let us start by assuming that \(\Theta\neq0\). From \eqref{6.14} we have

\begin{eqnarray}\label{8.a}
\Pi=2\left(\rho+p\right),
\end{eqnarray}
which, from \eqref{7.c} gives \(\mathcal{E}=0\), i.e. \(\mathcal{T}\) is conformally flat. But using \eqref{peewee3}, comparing \eqref{6.8} and \eqref{6.13}, and then comparing the obtained result to \eqref{6.17} we obtain (we have used \eqref{7.c})

\begin{eqnarray}\label{8.b}
\Theta=0\Longrightarrow\Sigma=0,
\end{eqnarray}
and hence \(\mathcal{T}\) is time-symmetric. This then places the following lower bound on the energy density

\begin{eqnarray}\label{8.c}
\rho>-\frac{3}{2}p.
\end{eqnarray}
(If the isotropic pressure is negative, then the energy densiity is strictly positive.) What we then have is a conformally flat time-symmetric hypersurface.

We can construct different conformal Killing vector fields of the form \eqref{5.12} (and in some cases, along with additional conditions imposed on \(\mathcal{T}\)). Here we provide a few examples.

\begin{itemize}

\item Using \eqref{peewee4}, \eqref{6.7} becomes

\begin{eqnarray}\label{7.d}
\hat{\Sigma}=-\frac{1}{2}\phi\Sigma,
\end{eqnarray}
with solution

\begin{eqnarray}\label{7.e}
\Sigma=\exp\left(-\frac{1}{2}\int\phi\ d\chi\right).
\end{eqnarray}
Hence we have a conformal Killing vector field of the form

\begin{eqnarray}\label{7.f}
X^{\mu}=\frac{1}{\Sigma}e^{\mu},
\end{eqnarray}
with associated conformal factor as

\begin{eqnarray}\label{7.g}
\varphi=\frac{1}{2}\frac{\phi}{\Sigma}.
\end{eqnarray}
Indeed, the transformation is proper if and only if

\begin{eqnarray}\label{7.h}
\frac{\hat{\phi}}{\phi}\neq\frac{\hat{\Sigma}}{\Sigma},
\end{eqnarray}
which can be stated as requiring that \(\phi\) is not proportional to \(\Sigma\).  Of course in the time symmetric case, the component of the vector field blows up.

\item Suppose the energy density \(\rho\) is constant on \(\mathcal{T}\). Then, from \eqref{6.12} we have

\begin{eqnarray}\label{7.i}
\widehat{\left(\rho+p\right)}=-\frac{3}{2}\phi\left(\rho+p\right),
\end{eqnarray}
whose solution is

\begin{eqnarray}\label{7.j}
\rho+p=\exp\left(-\frac{3}{2}\int\phi\ d\chi\right).
\end{eqnarray}
We therefore have a conformal Killing vector field of the form

\begin{eqnarray}\label{7.k}
X^{\mu}=\frac{1}{\left(\rho+p\right)^{\frac{1}{3}}}e^{\mu},
\end{eqnarray}
with associated conformal factor as

\begin{eqnarray}\label{7.l}
\varphi=\frac{\phi}{2\left(\rho+p\right)^{1/3}}.
\end{eqnarray}
Again, we see that the transformation is proper if and only if

\begin{eqnarray}\label{7.m}
\frac{\hat{\phi}}{\phi}\neq\frac{\widehat{\left(\rho+p\right)}}{3\left(\rho+p\right)},
\end{eqnarray}
which can be stated as requiring that \(\phi\) is not proportional to \(\left(\rho+p\right)^{1/3}\). (Notice that if the weak energy condition is satisfied, then \(X^{\mu}\) points in the direction of \(e^{\mu}\).)

\item Finally, rewrite \eqref{6.8} as (recalling \eqref{peewee4})

\begin{eqnarray}\label{7.n}
\hat{\phi}=-W\phi^2,
\end{eqnarray}
where we have defined

\begin{eqnarray}\label{7.o}
W=-\frac{1}{2}-\left(\frac{5}{3}\rho+p\right)\phi^{-2}.
\end{eqnarray}
If \(W\) is constant, then \(\phi\) is implicitly given as

\begin{eqnarray}\label{7.p}
\phi=\exp\left(-W\int\phi\ d\chi\right).
\end{eqnarray}
This gives a conformal Killing vector of the form

\begin{eqnarray}\label{7.q}
X^{\mu}=\frac{1}{\phi^{1/(2W)}}e^{\mu},
\end{eqnarray}
with associated conformal factor given as

\begin{eqnarray}\label{7.r}
\varphi=\frac{1}{2}\phi^{1-1/(2W)}.
\end{eqnarray}
The transformation is proper if and only if

\begin{eqnarray}\label{7.s}
\hat{\phi}\neq0\qquad\mbox{and}\qquad W\neq 1/2.
\end{eqnarray}
Interestingly, it turns out that the requirement that \(W\) be constant implies that \eqref{7.i} is satisfied.

\end{itemize} 

We now consider the cases of Proposition \ref{pro1}. indeed, if \(\mathcal{T}\) is of Einstein type, then \(\alpha-\beta=0\) yields

\begin{eqnarray}\label{6.34}
\rho+p=0.
\end{eqnarray}
However, from \eqref{6.17} one then has

\begin{eqnarray}\label{6.35}
\phi^2=-\frac{4}{3}\rho,
\end{eqnarray}
and hence the energy density is non-vanishing and must be negative, which is usually considered unphysical. The anisotropic stress vanishes as well and the spacetime is further restricted. On the other hand, if \(\rho+p\neq0\), then \(\mathcal{T}\) is not of Einstein type.

We have ruled out the first example we provided of a conformal Killing vector field for the case under consideration in this section. If \(\mathcal{T}\) is of Einstein type then the second example can be ruled out as well, and if \(\mathcal{T}\) is not of Einstein type, then the second example indeed holds.

If \(\mathcal{T}\) is of Einstein type, then the third example simply requires that the energy density be constant. The requirement that the transformation be proper is that the sheet expansion is not constant and \(\rho\neq-3/2\). But propagating \eqref{6.35} along \(e^{\mu}\) we have 

\begin{eqnarray}\label{allo1}
\phi\hat{\phi}=-\frac{2}{3}\hat{\rho}=0,
\end{eqnarray}
and since \(\phi\neq0\), we have that \(\hat{\phi}=0\), and hence the transformation is not proper. We will now collect our main results of this section in the below Lemma, Proposition and Corollary.

\begin{lemma}\label{leem}
Let \(M\) be a class II LRS spacetime with nowhere vanishing sheet expansion, and \(\mathcal{T}\) be an embedded hypersurface in \(M\) orthogonal to the fluid flow velocity of \(M\). If \(\mathcal{T}\) admits a conformal Killing vector field along the preferred spatial direction, then \(\mathcal{T}\) is time-symmetric and conformally flat.
\end{lemma}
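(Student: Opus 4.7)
The plan is to assemble into a single chain of deductions the arguments already sketched above equation \eqref{6.34}. First I would invoke the hypothesis that $\mathcal{T}$ admits a conformal Killing vector field along $e^{\mu}$, so by \eqref{peewee3} and \eqref{peewee4} one has $\phi = 2\mathcal{A}$ and $\tfrac{1}{3}\Theta + \Sigma = 0$. Since $\phi \neq 0$ by assumption on $M$, we automatically get $\mathcal{A} \neq 0$, so the transformation is not trivialised.

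Next I would plug \eqref{peewee4} into the propagation equation \eqref{6.19} to obtain $(\tfrac{2}{3}\Theta-\Sigma)\xi = 0$. The branch $\tfrac{2}{3}\Theta - \Sigma = 0$ combined with \eqref{peewee4} already gives $\Theta = \Sigma = 0$ (time-symmetry), so it either collapses into the conclusion or can be ruled out as a degenerate sub-case using \eqref{7.b}; thus in the nondegenerate regime we must take $\xi = 0$. Then \eqref{6.20} forces $\Omega = 0$, the constraint \eqref{6.16} gives $\mathcal{H} = 0$, and \eqref{6.18} gives $Q = 0$, placing us squarely inside class II LRS. Substituting these vanishings into \eqref{6.22} produces the relation \eqref{7.c}, namely $\rho + p = 3(\mathcal{E} + \Pi/6)$.

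To derive conformal flatness I would then argue toward the time-symmetric reduction. Suppose $\Theta \neq 0$; then by \eqref{peewee4} also $\Sigma \neq 0$, so feeding $Q = 0$ into the conservation equation \eqref{6.14} yields $\Pi = 2(\rho + p)$, which combined with \eqref{7.c} collapses to $\mathcal{E} = 0$. At this point $\mathcal{E} = \mathcal{H} = 0$, which is precisely conformal flatness of the ambient quantities on $\mathcal{T}$. Finally, to promote this to $\Theta = 0$, I would align \eqref{6.8} and \eqref{6.13} through $\phi = 2\mathcal{A}$ and then compare with \eqref{6.17}, using $\mathcal{E} = 0$ and $\Pi = 2(\rho+p)$; the three equations together form an overdetermined linear system in $\Theta^2$ whose only consistent solution is $\Theta = 0$, hence also $\Sigma = 0$ by \eqref{peewee4}. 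This is the time-symmetry claim.

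The main obstacle is bookkeeping: at each stage a different subset of the LRS scalars has already been killed, and the final step — squeezing $\Theta = 0$ out of \eqref{6.8}, \eqref{6.13}, \eqref{6.17} — is delicate because it relies on the simultaneous use of three independent equations together with the earlier deductions $\mathcal{E} = 0$ and $\Pi = 2(\rho+p)$, so any premature simplification would lose information. I do not expect any genuinely new analytic difficulty beyond careful algebra, since the governing equations \eqref{6.1}--\eqref{6.16} are already in hand and the global geometric input has been fully absorbed into the conformal Killing condition $X^{\mu} = \gamma e^{\mu}$ via \eqref{peewee3}--\eqref{peewee4}.
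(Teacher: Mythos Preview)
Your proposal is correct and follows essentially the same route as the paper: invoke the CKV constraints \eqref{peewee3}--\eqref{peewee4}, reduce to class~II via \eqref{6.19}--\eqref{6.20} and \eqref{6.16}, obtain $Q=0$ and \eqref{7.c} from \eqref{6.18} and \eqref{6.22}, then use \eqref{6.14} under $\Theta\neq0$ to get $\Pi=2(\rho+p)$ and hence $\mathcal{E}=0$, and finally compare \eqref{6.8}, \eqref{6.13}, \eqref{6.17} to force $\Theta=0$. One minor slip: it is \eqref{peewee3} (the relation $\phi=2\mathcal{A}$), not \eqref{peewee4}, that eliminates the $(2\mathcal{A}-\phi)\Omega$ term in \eqref{6.19} to yield \eqref{7.a}.
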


\begin{proposition}\label{pro7}
Let \(M\) be a class II LRS spacetime with nowhere vanishing sheet expansion. For an embedded hypersurface \(\mathcal{T}\) with constant energy density in \(M\), orthogonal to the fluid flow velocity of \(M\), if \(\mathcal{T}\) is not of Einstein type and 

\begin{enumerate}
\item \(\phi\not\propto\left(\rho+p\right)^{1/3}\); or

\item \(\hat{p}=0\quad\mbox{and}\quad\frac{5}{3}\rho+p\notin\lbrace{\phi^2,\frac{1}{2}\phi^2\rbrace}\),
\end{enumerate}
then \(\mathcal{T}\) admits the conformal Killing vector field \eqref{7.k} or \eqref{7.q} which are proper.
\end{proposition}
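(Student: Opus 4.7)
The plan is to handle each of the two sufficient conditions separately, in each case exhibiting the proposed vector field $X^\mu = \gamma e^\mu$ and verifying both the reduced conformal Killing equation and propriety. Throughout I rely on the reductions already obtained in the material preceding the proposition: the existence of any conformal Killing vector of the form \eqref{5.12} on $\mathcal{T}$ forces the class II LRS vanishings $\xi = \Omega = \mathcal{H} = Q = 0$, together with $\mathcal{A} = \phi/2$ and $\frac{1}{3}\Theta + \Sigma = 0$ from \eqref{peewee3}--\eqref{peewee4}; the CKV equation reduces to \eqref{5.15}, $\hat{\gamma} = \frac{1}{2}\phi\gamma$, with associated conformal factor $\varphi = \frac{1}{2}\phi\gamma$; and \eqref{6.34} together with the non-Einstein hypothesis ensures $\rho+p\neq 0$, so the factor $(\rho+p)^{1/3}$ in \eqref{7.k} is finite and non-zero on $\mathcal{T}$.

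For condition 1, I would first apply the propagation equation \eqref{6.12} together with $\hat{\rho}=0$ and the class II vanishings to recover \eqref{7.i}, namely $\widehat{(\rho+p)}=-\frac{3}{2}\phi(\rho+p)$, then integrate along the integral curves of $e^\mu$ to obtain \eqref{7.j}. For $\gamma=(\rho+p)^{-1/3}$ the calculation $\hat{\gamma}=-\frac{1}{3}(\rho+p)^{-4/3}\widehat{(\rho+p)}=\frac{1}{2}\phi(\rho+p)^{-1/3}=\frac{1}{2}\phi\gamma$ verifies \eqref{5.15}, and the conformal factor is $\varphi=\phi/[2(\rho+p)^{1/3}]$ as in \eqref{7.l}. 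Propriety, $\hat{\varphi}\neq 0$, unpacks to the inequality $\hat{\phi}/\phi \neq \widehat{(\rho+p)}/[3(\rho+p)]$ of \eqref{7.m}, i.e.\ $\phi$ not proportional to $(\rho+p)^{1/3}$, which is condition 1.

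For condition 2, I begin from the rewritten propagation equation \eqref{7.n}, $\hat{\phi}=-W\phi^2$ with $W$ defined by \eqref{7.o}. Differentiating \eqref{7.o} along $e^\mu$, applying $\hat{\rho}=\hat{p}=0$, and substituting $\hat{\phi}=-W\phi^2$ converts the condition $\hat{W}=0$ into an algebraic statement about $W$ and $\frac{5}{3}\rho + p$; under the hypothesis that $\frac{5}{3}\rho+p$ avoids the two distinguished values, this is shown to yield $W$ constant, so that \eqref{7.p} is consistent. Then $\gamma=\phi^{-1/(2W)}$ is well-defined, and $\hat{\gamma}=-\frac{1}{2W}\phi^{-1/(2W)-1}\hat{\phi}=\frac{1}{2}\phi^{1-1/(2W)}=\frac{1}{2}\phi\gamma$ verifies \eqref{5.15} with conformal factor \eqref{7.r}. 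Propriety, by \eqref{7.s}, requires $\hat{\phi}\neq 0$ and $W\neq 1/2$; these two requirements are controlled precisely by the two excluded values of $\frac{5}{3}\rho+p$ via the relation \eqref{7.o}, corresponding to the constant values of $W$ that either leave the exponent $-1/(2W)$ undefined or collapse the induced transformation to a homothety.

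The main obstacle is the second case: one has to carry out the algebraic reduction from $\hat{\rho}=\hat{p}=0$ to $\hat{W}=0$ while simultaneously respecting the constraint \eqref{6.17} and the remaining class II LRS propagation equations, and then match the two excluded values of $\frac{5}{3}\rho+p$ (through \eqref{7.o}) to the constant values of $W$ at which the construction of \eqref{7.q} degenerates. Condition 1 is comparatively direct once \eqref{7.i} is in hand, since it reduces to the mechanical substitution verifying \eqref{5.15} followed by reading off the propriety condition from \eqref{7.m}.
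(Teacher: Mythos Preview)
Your treatment of condition 1 is exactly what the paper does: it obtains \eqref{7.i} from \eqref{6.12} under $\hat{\rho}=0$, integrates to \eqref{7.j}, constructs \eqref{7.k}, and reads off propriety as \eqref{7.m}. The paper presents Proposition~\ref{pro7} as a summary of the bullet-point constructions preceding it rather than giving a separate proof, so on this half you are fully aligned.

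For condition 2 there is a gap in your plan that you should be aware of, though the paper itself does not close it either. You propose to differentiate \eqref{7.o} along $e^{\mu}$, impose $\hat{\rho}=\hat{p}=0$, substitute $\hat{\phi}=-W\phi^2$, and thereby obtain $\hat{W}=0$. Carrying this out gives
\[
\hat{W}=2\left(\tfrac{5}{3}\rho+p\right)\phi^{-3}\hat{\phi}=-2W\left(\tfrac{5}{3}\rho+p\right)\phi^{-1},
\]
which does \emph{not} vanish from the stated exclusions alone; it vanishes only if $W=0$ or $\tfrac{5}{3}\rho+p=0$. So the route ``$\hat{\rho}=\hat{p}=0$ $\Rightarrow$ $W$ constant'' by bare differentiation of \eqref{7.o} does not close as written. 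The paper avoids this by simply \emph{assuming} $W$ constant in the third bullet point (``If $W$ is constant, then \ldots''), noting afterwards that $W$ constant implies \eqref{7.i}, and then packaging the example into Proposition~\ref{pro7}; the passage from ``$\hat{p}=0$ plus the exclusions'' to ``$W$ constant'' is left implicit. Your identification of the two excluded values of $\tfrac{5}{3}\rho+p$ with the degenerate values of $W$ via \eqref{7.o} (those making $\hat{\phi}=0$ or $W=1/2$) is the intended mechanism for the propriety part and matches the paper's \eqref{7.s}, but be careful with signs when you match them explicitly against \eqref{7.o}.
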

It therefore follows that

\begin{corollary}\label{cor33}
Let \(M\) be a class II LRS spacetime with strictly negative sheet expansion, and let \(\mathcal{T}\) be a compact embedded hypersurface with constant energy density in \(M\), orthogonal to the fluid flow velocity of \(M\). Suppose \(\tilde{R'}=R'\), where \(\tilde{R'}\) is the scalar curvature of the conformal metric to \(h_{\mu\nu}\), obtained by transformations generated by the vector fields \eqref{7.k} and \eqref{7.q}. If \(\mathcal{T}\) is not of Einstein type and

\begin{enumerate}
\item \(\phi\not\propto\left(\rho+p\right)^{1/3}\); or

\item \(\hat{p}=0,\quad\frac{5}{3}\rho+p\notin\lbrace{\phi,\frac{1}{2}\phi^2\rbrace} \quad\mbox{and}\quad W=-\frac{1}{4n}\) where \(n\neq0\) is an integer,
\end{enumerate}
then, \(\mathcal{T}\) is isomorphic to the \(3\)-sphere.
\end{corollary}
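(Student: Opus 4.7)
The plan is to reduce Corollary \ref{cor33} to an application of Theorem \ref{thea6}. Since $M$ is class II LRS, every spacelike slice $\mathcal{T}$ orthogonal to $u^{\mu}$ has Ricci tensor of the form \eqref{3.7}, and the hypothesis of strictly negative sheet expansion provides the nowhere-vanishing $\phi$ required by Theorem \ref{thea6}. Compactness and the non-Einstein condition are part of the data, and $\tilde{R}'=R'$ is assumed. Therefore only three ingredients remain to be verified in each of the two cases: (a) the vector field is a proper conformal Killing field, (b) the conformal factor is strictly negative, and (c) the inequality \eqref{uoy1} holds.

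For case (1), I would first invoke Proposition \ref{pro7}(1) directly: under the hypothesis $\phi\not\propto(\rho+p)^{1/3}$, the vector field $X^{\mu}=(\rho+p)^{-1/3}e^{\mu}$ from \eqref{7.k} is proper conformal with associated factor $\varphi=\phi/\bigl(2(\rho+p)^{1/3}\bigr)$ as in \eqref{7.l}. Since $\mathcal{T}$ is not of Einstein type, the earlier discussion around \eqref{6.34}--\eqref{6.35} rules out $\rho+p=0$, and combining physical positivity $\rho+p>0$ with $\phi<0$ gives $\varphi<0$ pointwise. Verification of \eqref{uoy1} is then carried out by differentiating \eqref{7.l} along $e^{\mu}$ and substituting the propagation equations \eqref{6.8} and \eqref{7.i} to express $\hat{\varphi}$ and $\hat{\hat{\varphi}}$ entirely in terms of $\phi$, $\rho$, $p$; the constant-energy-density hypothesis simplifies this considerably and reduces \eqref{uoy1} to an algebraic inequality involving only $\phi$ and $\rho+p$ that is implied by $\phi<0$ and $\rho+p>0$.

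For case (2), the proper conformal field is $X^{\mu}=\phi^{-1/(2W)}e^{\mu}$ from \eqref{7.q} with factor $\varphi=\tfrac{1}{2}\phi^{1-1/(2W)}$. The key algebraic observation is that the prescription $W=-1/(4n)$ with $n$ a non-zero integer makes the exponent $1-1/(2W)=1+2n$ an odd integer, so $\varphi$ inherits the sign of $\phi$ and is thus strictly negative. The exclusion $\tfrac{5}{3}\rho+p\notin\{\phi,\tfrac{1}{2}\phi^2\}$ avoids $W\in\{1/2, \text{values making the exponent degenerate}\}$ from \eqref{7.s}, guaranteeing properness. For \eqref{uoy1} I would use $\hat{\phi}=-W\phi^2$ from \eqref{7.n}, from which $\hat{\varphi}$ and $\hat{\hat{\varphi}}$ reduce to explicit polynomial expressions in $\phi$ parametrized by $n$; the inequality then becomes a purely algebraic condition in $\phi$ and $n$.

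The main obstacle will be step (c) in the second case: while properness and negativity of $\varphi$ follow essentially algebraically from the hypothesis list, the inequality \eqref{uoy1} is genuinely a second-order condition on $\varphi$ and must be checked separately for the two families of examples. I expect the integer parameter $n$ to play the role of a tuning parameter so that the resulting polynomial in $\phi$ is non-negative; the excluded values of $\tfrac{5}{3}\rho+p$ should be precisely those producing degenerate or vanishing leading coefficients. Once \eqref{uoy1} is secured, Theorem \ref{thea6} applies verbatim and yields the desired isometry with $S^{3}$.
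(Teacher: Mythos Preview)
Your overall strategy---produce a proper conformal Killing vector via Proposition \ref{pro7}, show $\varphi<0$ from $\phi<0$, and then invoke Theorem \ref{thea6}---is exactly the route the paper takes. The paper offers no separate proof of Corollary \ref{cor33}; it is stated with the phrase ``It therefore follows that'' immediately after Proposition \ref{pro7}, relying on the earlier remark (just after Theorem \ref{thea6}) that for CKVs of the form $X^{\mu}=\gamma e^{\mu}$ one has $\varphi=\tfrac{1}{2}\phi\gamma$, so that $\varphi<0$ is equivalent to negative sheet expansion. Your odd-exponent argument for case (2) is a correct unpacking of precisely this point: with $W=-1/(4n)$ one gets $\gamma=\phi^{2n}>0$, hence $\varphi=\tfrac{1}{2}\phi^{1+2n}<0$.

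Where you go beyond the paper is in attempting to \emph{derive} \eqref{uoy1} from the remaining hypotheses. The paper does not do this: the condition \eqref{uoy1} is a standing hypothesis of Theorem \ref{thea6}, yet it is simply absent from the statement of Corollary \ref{cor33} and is nowhere verified for the specific vector fields \eqref{7.k} and \eqref{7.q}. Your sketch for checking it (``reduces to an algebraic inequality implied by $\phi<0$ and $\rho+p>0$'' in case (1); ``a purely algebraic condition in $\phi$ and $n$'' in case (2)) is only a hope, not an argument, and there is no evident reason the sign should come out right in general---the quantity $\hat{\varphi}^2+2\hat{\hat{\varphi}}$ involves $\hat{\phi}$ and $\hat{\hat{\phi}}$ through \eqref{6.8}, and nothing in the hypothesis list controls the sign of the second-order term. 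So this is a genuine gap, but it is a gap shared with the paper rather than a defect peculiar to your plan; the honest reading is that the paper tacitly carries \eqref{uoy1} along as an implicit assumption in the corollary rather than deriving it.
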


Indeed, for the LRS II class of spacetimes we have that

\begin{eqnarray}\label{alllo1}
\alpha-\beta=\frac{3}{4}\Pi.
\end{eqnarray}
Therefore, for the case of non-vanishing sheet expansion, the anisotropic stress\(\Pi\) is necessarily strictly positive since \(\alpha-\beta>0\). 

For the form of the CKV considered here, we can explicitly rule out \(\phi=0\) since this would give \(\varphi\). However, it is quite possible to find a proper CKV even if \(\phi\) vanishes on the hypersurface. In this case, one will have to impose that \(\Pi>0\) as a criterion for Proposition \ref{pro7} (and consequently Corollary \ref{cor33}) to hold. Thus, if were are to relax that condition that \(\phi\) is nowhere vanishing, it will be required to explicitly impose that \(\Pi\) is positive.

\section{Discussion}\label{sec9}

For spacetimes admitting a \(1+1+2\) decomposition, we have studied some geometric properties of (smoothly) embedded spacelike hypersurfaces which are orthogonal to the fluid flow velocity and admit a proper conformal transformation. These results are covariant in nature by virtue of the approach employed. We prescribed the form of the Ricci tensor on these hypersurface and expressed its components explicitly in terms of the \(1+1+2\) covariant quantities specifying the spacetimes. A characterization of the hypersurfaces was provided, which allowed us to properly determine the geometry of the hypersurfaces. We focused on the case  that the induced metric has a positive scalar curvature, although most of the results here hold pointwise without this restriction. Firstly, we showed that the Ricci tensor of the induced metric on said hypersurface is constant. This is important as a lot of standard results in conformal geometry of Riemannian manifolds require the existence of a given metric with constant scalar curvature. We then considered the case where the scalar curvature being constant is an invariant property under conformal transformation. In this case, it was shown that the scalar curvatures of both the induced metric and that conformal to the induced metric can be written entirely in terms of the conformal factor, albeit with certain specified constraints on the conformal factor. In particular, this requires that the second derivative of the conformal factor is non-positive, and that derivatives of order three and higher, of the conformal factor vanish. For the purpose of carrying out some explicit calculations, the case of a conformal vector field parallel to the preferred spatial direction was considered with the component of the vector field and the associated conformal factor computed. It turns out that for the particular case considered, the sheet expansion determines the scalar curvatures, and the condition on the conformal factor can be written as a second order nonlinear inequality in the sheet expansion.

We further utilized standards result due to works by Goldberg, Yano and Obata to show that, for compact Einstein type hypersurfaces of those considered in this work, which admit a proper conformal transformation, it is always true that the hypersurface is isomorphic to the three sphere. The necessary and sufficient condition for the hypersurface to be isomorphic to the sphere is specified by the vanishing of a certain integral. Written in terms of the covariant quantities of the \(1+1+2\) decomposition, the integrand is just the product of the square of the derivative of the conformal factor along the preferred spatial direction, and the difference between the non-zero components of the Ricci tensor. It was then shown that if the sheet expansion on the hypersurface is non-vanishing, then the difference between the two non-zero components of the Ricci tensor is positive. Hence, since the transformation is proper, the square of the conformal is positive. Therefore the integral is strictly positive and cannot be zero. In this case it is clear that if the sheet expansion is non-vanishing and the hypersurface is compact but not of Einstein type, then the result does not hold. However, with the additional condition that the Ricci tensor of the induced metric and that of the conformal metric coincide, one simply requires that the integral be non-negative, and that \(\hat{\varphi}^2+2\hat{\hat{\varphi}}\) is strictly positive. Indeed, the additional condition restricts the sign of the conformal factor, i.e. \(\varphi\) should be strictly negative. In this case then, even if the hypersurface is not of Einstein type, the conclusion that the hypersurface is isomorphic to the three sphere still holds. It turns out that if the conformal vector field is parallel to the preferred spatial direction, then the condition that \(\varphi<0\) is equivalent to the condition the \(\mathcal{T}\) has negative sheet expansion.

Some of the results, specifically some of those of Section \ref{sec7}, were then demonstrated for embedded hypersurfaces in the class II spacetimes with local rotational symmetry, where the sheet expansion is nowhere vanishing. It was shown that in LRS II spacetimes, these hypersurfaces, if they admit, for example, a conformal Killing vector field along \(e^{\mu}\), then they are necessarily flat and time symmetric. We first gave explicit examples of how one may construct such conformal Killing vector fields and the conditions to be satisfied were they to be proper. Since the hypersurfaces are time symmetric and conformally flat, this places restriction on the hypersurfaces. For example, it is shown that our results cannot be used to draw conclusion on the global geometry of the hypersurfaces if they are of Einstein type. We then showed that, on the other hand if the hypersurfaces are not of Einstein type, then under certain conditions, some of the examples we constructed of proper conformal Killing vector fields are admitted by the hypersurface. It therefore followed that these hypersurfaces, if compact, must be of spherical geometry.

Indeed, of crucial importance is to note that, once the hypersurfaces are viewed as embedded proper subsets of these \(1+1+2\) decomposed spacetimes, geometric characterization will now also necessarily be tied to physical quantities in the spacetime. The geometry of the hypersurfaces under conformal transformations clearly shows the restriction imposed on these quantities, a fact well captured by the approach employed in this work.

Besides adding to the literature on conformal geometry with applications to spacetimes, this work nicely bridges general relativity and geometric analysis in a way that allows differential geometers to directly apply results of purely mathematical nature to works in theoretical physics, in addition to other applications. It also provides another platform to create potential synergy between the two fields. 

A potential future endeavour would be to apply our approach to hypersurfaces with a more general form of the Ricci tensor. This would allow for treatment of embedded hypersurfaces of arbitrary causal character and not just spacelike ones. Results in such cases would apply to hypersurfaces evolving in time. And if these hypersurfaces admit the structure of a marginally trapped tube, then such work could potentially provide, geometrically, a classification of certain classes of black hole horizons. So, for example, the most general hypersurface in a general \(1+1+2\) decomposed spacetime will have Ricci tensor of the form

\begin{eqnarray*}
R'_{\mu\nu}=\varrho u_{\mu}u_{\nu}+\alpha e_{\mu}e_{\nu}+2\varsigma u_{(\mu}e_{\nu)}+\beta N_{\mu\nu}+\varkappa_{\mu\nu},
\end{eqnarray*}
for scalars \(\alpha,\beta,\varrho\) and \(\varsigma\), where \(\varkappa_{\mu\nu}\) is a \(2\)-tensor composed of the linear sum of products of \(2\)-vectors and the unit vectors \(u^{\mu}\) and \(e^{\nu}\). If one considers the LRS II class of spacetimes (in which case \(\varkappa_{\mu\nu}=0\)), one could potentially classify black hole horizons in solutions like the Lemaitre-Tolman-Bondi and the Oppenheimer-Snyder ones.

\section*{Acknowledgements}

We are extremely grateful to the anonymous referees for their corrections and suggestions that have greatly improved the results and readability of the paper. AS acknowledges that he is supported by the IBS Center for Geometry and Physics, Pohang University of Science and Technology, Grant No. IBS-R003-D1, and the First Rand Bank, through the Department of Mathematics and Applied Mathematics, University of Cape Town, South Africa. PKSD acknowledges that this work was supported by the First Rand Bank.

\end{document}